  \newcolumntype{d}[1]{>{\raggedleft\hspace{0pt}}p{#1}} 
  \newcolumntype{i}[1]{>{\raggedright\hspace{0pt}}p{#1}} 
  \newcolumntype{x}[1]{>{\centering\let\newline\\\arraybackslash\hspace{0pt}}p{#1}} 
  \definecolor{alert}{rgb}{1,0,0}
  \numberwithin{equation}{section}
  \newcommand\blfootnote[1]{%
  \begingroup
  \renewcommand\thefootnote{}\footnote{#1}%
  \addtocounter{footnote}{-1}%
  \endgroup
}
  \title{\Large{Parallel mean curvature surfaces in four-dimensional homogeneous spaces}}
  \author{\normalsize\textsc{José M.\ Manzano, Francisco Torralbo, and Joeri Van der Veken}}
  \date{}
  \newtheorem{theorem}{Theorem}[section]
  \newtheorem{proposition}{Proposition}[section]
  \newtheorem{lemma}{Lemma}[section]
  \theoremstyle{definition}
    \newtheorem{definition}{Definition}[section]
  \theoremstyle{remark}
    \newtheorem{remark}{Remark}[section]
\begin{document}

\maketitle

\begin{abstract}
\noindent\textsc{Abstract.} We survey different classification results for surfaces with parallel mean curvature immersed into some Riemannian homogeneous four-manifolds, including real and complex space forms and product spaces. We provide a common framework for this problem, with special attention to the existence of holomorphic quadratic differentials on such surfaces. The case of spheres with parallel mean curvature is also explained in detail, as well as the state-of-the-art advances in the general problem.\blfootnote{Support: Spanish MEC-Feder Research Project MTM2014-52368-P (first and second authors), EPSRC Grant No. EP/M024512/1 (first author); Belgian Interuniversity Attraction Pole P07/18 ``Dynamics, Geometry and Statistical Physics" (second and third authors); KU Leuven Research Fund project 3E160361 ``Lagrangian and calibrated submanifolds" (third author). The authors would like to thank Prof.\ K.\ Kenmotsu for his valuable comments. The second author would also like to thank the Geometry Section of the Department of Mathematics at KU Leuven for their kindness during his stay at KU Leuven.}\\[-5pt]

\noindent\textsc{Keywords and phrases.} Parallel mean curvature, constant mean curvature, holomorphic quadratic differentials, Thurston geometry.\\[-8pt]

\noindent\textsc{MSC 2010.} Primary $53\text{C}42$; Secondary $53\text{C}30$.
\end{abstract}

\section{Introduction}

Surface Theory in three-dimensional manifolds is a classical topic in Differential Geometry. Although the most extensive investigation has been carried out in ambient three-manifolds with constant curvature, the so-called \emph{space forms}, there has been a growing interest in considering the broader family of homogeneous three-manifolds. Among the different geometrically distinguished families of surfaces, we will focus on those which have constant mean curvature (\textsc{cmc} in the sequel). When the codimension is bigger than one, a natural generalization of \textsc{cmc} surfaces are those whose mean curvature vector is not constant but parallel in the normal bundle. These surfaces are called parallel mean curvature surfaces (\textsc{pmc} from now on, see Definition~\ref{def:pmc}), and enjoy some of the properties of \textsc{cmc} surfaces in codimension one.

The aim of this work is to gather some results on the classification of \textsc{pmc} surfaces when the codimension is two and the ambient space is homogeneous, sketching a parallelism with \textsc{cmc} surfaces in homogeneous three-manifolds. The connection between these two theories principally comes from the fact that \textsc{cmc} surfaces in totally umbilical \textsc{cmc} hypersurfaces of a four-manifold become \textsc{pmc} surfaces (cf.\ Proposition~\ref{prop:cmc-totally-umbilical}). When the four-manifold is homogeneous typically such a hypersurface is also homogeneous. Nonetheless, there could be \textsc{pmc} surfaces not factoring through a hypersurface in this sense, as we will discuss below. The interested reader can refer to~\cite{DHM2009} and~\cite{MP2012} for an introduction to \textsc{cmc} surfaces in homogeneous three-manifolds. Another approach that covers both \textsc{cmc} and \textsc{pmc} surfaces as critical points of extended area functionals can be found in~\cite{Salavessa2010}.

In the seventies, Ferus~\cite{Ferus71} proved that an immersed \textsc{pmc} sphere in a space form is a round sphere (cf.\ Theorem~\ref{thm:pmc-spheres-space-forms}), and afterwards Chen~\cite{Chen73} and Yau~\cite{Yau74} classified \textsc{pmc} surfaces in space forms, showing that they are \textsc{cmc} surfaces in three-dimensional totally umbilical hypersurfaces (cf.\ Theorem~\ref{thm:classification-pmc-space-forms}). It is also important to mention the contribution of Hoffman~\cite{Hoffman1973}, who classified \textsc{pmc} surfaces of $\mathbb{R}^4$ and $\mathbb{S}^4$ in terms of analytical functions assuming their Gauss curvature does not change sign.

Almost thirty years later, Kenmotsu and Zhou~\cite{KZ00} undertook the classification of \textsc{pmc} surfaces in the complex space forms $\mathbb{C}\mathrm{P}^2$ and $\mathbb{C}\mathrm{H}^2$, based on a result of Ogata~\cite{Ogata95}. However, soon thereafter Hirakawa~\cite{Hirakawa2006}  pointed out a mistake in Ogata's equation, but gave a classification of the \textsc{pmc} spheres in $\mathbb{C}\mathrm{P}^2$ and $\mathbb{C}\mathrm{H}^2$. The mistake was corrected in~\cite{KO2015} but the classification was still incomplete. Finally, Kenmotsu has recently published a correction~\cite{Kenmotsu16} that closes the classification problem. The complete classification then follows from both~\cite{Hirakawa2006} and~\cite{Kenmotsu16}.

The classification of \textsc{pmc} surfaces in four-dimensional manifolds has also been treated in $\mathbb M^3(c) \times \mathbb{R}$, where $\mathbb M^n(c)$ denotes the $n$-dimensional space form of constant sectional curvature $c$. On the one hand, de Lira and Vit\'{o}rio~\cite{LV08} classified the \textsc{pmc} spheres (cf.\ Theorem~\ref{thm:pmc-spheres-space-formxR}). On the other hand, Alencar, do Carmo and Tribuzy~\cite{AdCT2010} proved reduction of codimension for \textsc{pmc} surfaces in $\mathbb M^n(c) \times \mathbb{R}$ (cf.\ Theorem~\ref{thm:classification-pmc-surfaces-space-formxR}) also classifying \textsc{pmc} spheres (cf.\ Theorem~\ref{thm:pmc-spheres-M4xR}). Mendon\c{c}a and Tojeiro~\cite{MT2014} improved Alencar, do Carmo and Tribuzy's result under some additional conditions (see Section~\ref{subsec:general-M4}).

A few years ago, the second author and Urbano~\cite{TU12} classified the \textsc{pmc} spheres in the product four-manifolds $\mathbb{S}^2 \times \mathbb{S}^2$ and $\mathbb{H}^2 \times \mathbb{H}^2$, as well as a large family of \textsc{pmc} surfaces that satisfy an extra condition on the extrinsic normal curvature (cf.\ Theorem~\ref{thm:clasification-pmc-M2xM2}). Fetcu and Rosenberg also tackled the problem in other ambient manifolds obtaining several partial results, namely, in $\mathbb{S}^3\times \mathbb{R}$ and $\mathbb{H}^3\times \mathbb{R}$~\cite{FR2012}, in $\mathbb M^n(c) \times \mathbb{R}$~\cite{FR2013}, in $\mathbb{C}\mathrm{P}^n\times \mathbb{R}$ and $\mathbb{C}\mathrm{H}^n\times \mathbb{R}$~\cite{FR2014} and also in Sasakian space forms~\cite{FH2014}, including the Heisenberg space of any odd dimension.

An interesting family where to study the classification problem for \textsc{pmc} surfaces in is that of the four-dimensional Thurston geometries, i.e., homogeneous four-manifolds whose isometry group acts transitively and effectively on them, and the stabilizer subgroup at each point is compact. Usually the isometry group is required to be maximal in the sense that it cannot be enlarged to another subgroup. Under these assumptions, there are 19 types of Thurston geometries in dimension 4, listed in Table \ref{tb:Thurston-4-geometries} below. We will emphasize the product geometries, which might be the first spaces where \textsc{pmc} surface should be understood:
\begin{itemize}
 \item The product spaces $\mathbb{M}^3(c)\times \mathbb{R}$ (see Sections~\ref{subsec:holomorphic-differentials-M3xR} and~\ref{subsec:pmc-surfaces-M3xR}). The classification of the \textsc{pmc} spheres was done by de Lira and Vit\'{o}rio~\cite{LV08}, but the general classification remains open.

  \item The product spaces $\mathbb M^2(c_1)\times\mathbb{M}^2(c_2)$. The classification of spheres is known when $c_1=c_2$ (see Sections~\ref{subsec:holomorphic-differentials-Riemannian-products} and~\ref{subsec:pmc-surfaces-Riemannian-products}), but the general case remains still open, although there are some partial results (see Section~\ref{subsec:holomorphic-differentials-S2xH2}).

  \item The product spaces $\mathrm{Nil}_3 \times \mathbb{R}$, $\widetilde{\mathrm{Sl}}_2(\mathbb{R})\times \mathbb{R}$ and $\mathrm{Sol}^3\times\mathbb{R}$ (the latter is included in the family $\mathrm{Sol}_{m,n}^4$ in Table~\ref{tb:Thurston-4-geometries}).
\end{itemize}
Dropping the condition on the maximality of the isometry group, a simply connected homogeneous four-dimensional product manifold is either of the form $\mathbb M^2(c_1)\times\mathbb{M}^2(c_2)$ or $G\times\mathbb{R}$, where $G$ is a Lie group endowed with a left-invariant metric (see~\cite{MP2012}). In the latter family, it is worth highligthing the family $\mathbb{E}(\kappa, \tau) \times \mathbb{R}$ where $\mathbb{E}(\kappa, \tau)$, $\kappa -4\tau^2 \neq 0$, denotes the two-parameter family of simply connected three-manifolds with isometry group of dimension four (see~\cite{VdV08}, \cite{DHM2009} and the references therein).

The existence of holomorphic quadratic differentials for \textsc{pmc} surfaces has been central in their classification. Note that \textsc{cmc} surfaces in $\mathbb{E}(\kappa,\tau)$-spaces admit a holomorphic quadratic differential called the Abresch-Rosenberg differential~\cite{AR05}. This fact plays a key role in the definition of holomorphic quadratic differentials for \textsc{pmc} surfaces in $\mathbb{H}^3\times\mathbb{R}$ and $\mathbb{S}^3\times\mathbb{R}$ (see Section~\ref{subsec:pmc-surfaces-M3xR}).

\begin{table}[htbp]
  \centering
  \begin{tabular}{p{.35\textwidth}cc p{.23\textwidth}}
    \hline 
    \textsc{Geometry} & \textsc{Isotropy}   &  $\mathrm{dim}(\mathrm{Iso})$ & \textsc{K{\"a}hler}\\
    \hline 
    $\mathbb{S}^4$, $\mathbb{R}^4$, $\mathbb{H}^4$ & $\mathrm{SO}_4$  &   10  &  No, except $\mathbb{R}^4$ \\
    $\mathbb{C}\mathrm{P}^2$, $\mathbb{C}\mathrm{H}^2$ & $\mathrm{U}_2$   &   9  &  Yes \\
    $\mathbb{S}^3\times \mathbb{R}$, $\mathbb{H}^3\times \mathbb{R}$ &     $\mathrm{SO}_3$ &  7 & No \\
    $\mathbb{S}^2\times \mathbb{S}^2$, $\mathbb{H}^2\times \mathbb{H}^2$, $\mathbb{S}^2\times \mathbb{R}^2$, $\phantom{a}\quad\mathbb{S}^2\times \mathbb{H}^2$, $\mathbb{H}^2\times \mathbb{R}^2$  & $\mathrm{SO}_2\times \mathrm{SO}_2$   &  6   &  Yes \\
    $\widetilde{\mathrm{Sl}}_2(\mathbb{R}) \times \mathbb{R}$, $\mathrm{Nil}_3 \times \mathbb{R}$, $\mathrm{Sol}^4_0$  & $\mathrm{SO}_2$  &   5 &  No \\
    $\mathrm{F}^4$  & $(\mathrm{S}^1)_{1,2}$  &   5 &  Yes \\
    $\mathrm{Nil}_4$, $\mathrm{Sol}^4_{m,n}$, $\mathrm{Sol^4_1}$  & $\{1\}$  &  4 &  No\\
    \hline 
  \end{tabular}
  \caption{List of Thurston four-dimensional geometries, their isotropy group (cf.~\cite[\S1]{Wall1986} and~\cite{Maier1998}), the dimension of their isometry group, and whether or not they admit a K{\"a}hler structure compatible with the geometric structure (cf.~\cite[Theorem~1.1]{Wall1986}). The spaces $\widetilde{\mathrm{Sl}}_2(\mathbb{R}) \times \mathbb{R}$, $\mathrm{Nil}_3 \times \mathbb{R}$, $\mathrm{Sol}^4_0$, and $\mathrm{Sol}_1^4$ are not K\"{a}hler but do admit complex structures. Here, $(\mathrm{S}^1)_{m,n}$ denotes the image of the unit circle $\mathbb{S}^1 \subset \mathbb{C}$ in $\mathrm{U}_2$ by $z \mapsto (z^m, z^n)$.}
  \label{tb:Thurston-4-geometries}
\end{table}
It is worth mentioning that \textsc{pmc} surfaces have been also studied in pseudo-Riemannian manifolds. A classification was achieved for non-degenerate \textsc{pmc} surfaces in the four-dimensional Lorentzian space forms~\cite{CV2009}. It turns out that, as in the Riemannian case, all \textsc{pmc} surfaces lie in three-dimensional submanifolds. This classification was afterwards extended to any codimension and any signature of the metric (see~\cite{Chen2009} for the spacelike case and \cite{Chen2010, FH2010} for the timelike case). In the sequel we will restrict ourselves to the Riemannian case.

\section{Definitions and first properties}

Let $M$ be an $n$-dimensional orientable Riemannian manifold with metric $\langle{\,,\,}\rangle$ and Levi-Civita connection $\overline{\nabla}$, and let $\phi:\Sigma\to M$ be an isometric immersion of an orientable Riemannian surface $\Sigma$. The tangent space $T_p\Sigma$ will be identified with $\mathrm{d}\phi(T_p\Sigma)\subset T_{\phi(p)}M$ in the sequel, so the metric on $\Sigma$ will also be denoted by $\langle{\,},{\,}\rangle$ since the immersion is isometric. Therefore $T_{\phi(p)} M$ admits an orthogonal decomposition $T_{\phi(p)}M=T_p\Sigma \oplus T_p^\perp \Sigma$, where $T^\perp\Sigma$ is the so-called \emph{normal bundle} of the immersion.  This leads to considering the space $\mathfrak{X}(\Sigma)$ of (tangent) vector fields, i.e., smooth sections of $T\Sigma$, and the space $\mathfrak{X}^\bot(\Sigma)$ of normal vector fields, i.e., smooth sections of $T^\perp\Sigma$. We will denote by $u^\top\in T_p\Sigma$ and $u^\perp\in T_p^\perp\Sigma$ the components of a vector $u\in T_{\phi(p)}M$ with respect to this decomposition.

Given a normal vector field $\eta\in\mathfrak{X}^\perp(\Sigma)$, we can define the shape operator associated with $\eta$ as the self-adjoint endomorphism $A_\eta:\mathfrak{X}(\Sigma)\to\mathfrak{X}(\Sigma)$ given by $A_\eta(X)=-(\overline{\nabla}_X\eta)^\top$. Then the second fundamental form $\sigma:\mathfrak{X}(M)\times\mathfrak{X}(M)\to\mathfrak{X}^\perp(\Sigma)$ satisfies $\langle\sigma(X,Y),\eta\rangle=\langle A_\eta(X),Y\rangle$ for all $X,Y\in\mathfrak{X}(\Sigma)$ and $\eta\in\mathfrak{X}^\perp(\Sigma)$. The mean curvature vector $H$ of the immersion at $p\in \Sigma$ is defined as $H(p)=\frac{1}{2}(\sigma(e_1,e_1)+\sigma(e_2,e_2))$, where $\{e_1,e_2\}$ is an orthonormal basis of $T_p\Sigma$.

The normal bundle $T^\perp \Sigma$ can also be endowed with a connection $\nabla^\perp: \mathfrak{X}(\Sigma) \times \mathfrak{X}^\perp(\Sigma)\to \mathfrak{X}^\perp(\Sigma)$ defined as $\nabla^\perp_X \eta = (\overline{\nabla}_X \eta)^\perp$ for all $X\in\mathfrak{X}(\Sigma)$ and $\eta\in\mathfrak{X}^\perp(\Sigma)$.  This connection is called the \emph{normal connection}, and gives rise to a \emph{curvature tensor} $R^\perp:\mathfrak{X}(\Sigma)\times\mathfrak{X}(\Sigma)\times\mathfrak{X}^\perp(\Sigma)\to\mathfrak{X}^\perp(\Sigma)$, given by
\begin{equation}\label{eqn:normal-curvature-tensor}
R^\perp(X, Y)\eta = \nabla^\perp_{X}\nabla^\perp_{Y}\eta-\nabla^\perp_{Y}\nabla^\perp_{X}\eta - \nabla^\perp_{[X,Y]}\eta,\quad X,Y\in\mathfrak{X}(\Sigma),\ \eta\in\mathfrak{X}^\perp(\Sigma).
\end{equation}

\begin{definition}[Parallel mean curvature immersion]\label{def:pmc}
An isometric immersion $\phi:\Sigma \to M$ is said to have parallel mean curvature (\textsc{pmc} for short) if its mean curvature vector $H \in \mathfrak{X}^\perp(\Sigma)$ is parallel in the normal bundle, i.e., $\nabla^\perp H = 0$, but not identically zero.
\end{definition}

\begin{remark}
The minimal case $H=0$ has been excluded from Definition~\ref{def:pmc} due to several reasons that will become clear after Lemma~\ref{lm:first-properties-pmc}. Essentially, it is not possible to define a natural orthonormal frame in the normal bundle if $H=0$, which is crucial for some of the arguments below.
\end{remark}

Although several results in higher codimension will be mentioned hereinafter, let us assume now that the codimension is $2$, where \textsc{pmc} surfaces enjoy additional properties. In the first place, we will furnish the normal bundle with a natural orientation provided that both $M$ and $\Sigma$ are oriented, and define a notion of curvature in the normal bundle.

\begin{definition}\label{def:orientation-normal-bundle}
A basis $\{\eta, \nu\}$ in $T_p^\perp \Sigma$ is said to be \emph{positively oriented} if and only if $\{e_1, e_2, \eta, \nu\}$ is positively oriented in $T_{\phi(p)}M$ whenever $\{e_1, e_2\}$ is  a positively oriented basis of $T_p\Sigma$.
\end{definition}

The \emph{normal curvature} of $\phi$ is the smooth function $K^\perp\in C^\infty(\Sigma)$ defined by
\begin{equation}\label{eqn:normal-curvature}
K^\bot(p)=\langle R^\bot(e_1,e_2)e_3,e_4\rangle,
\end{equation}
where $\{e_1, e_2, e_3, e_4\}$ is an orthonormal basis of $T_pM$ such that $\{e_1, e_2\}$ and $\{e_3, e_4\}$ are positively oriented bases in $T_p\Sigma$ and $T_p^\perp \Sigma$, respectively.

\begin{lemma}\label{lm:first-properties-pmc}
Let $\phi: \Sigma \rightarrow M$ be a \textsc{pmc} immersion. Then the mean curvature vector $H$ has constant length (in particular, $H$ never vanishes). If additionally the codimension is $2$, then:
  \begin{enumerate}
    \item[(i)] \label{lm:first-properties-pmc:item:def-tilde-H} There exists a unique parallel normal field $\widetilde{H}\in\mathfrak{X}^\perp(\Sigma)$ such that the global frame $\{\widetilde{H}/|H|, H/|H|\}$ is positively oriented and orthonormal in $T^\perp\Sigma$.

    \item[(ii)] \label{lm:first-properties-pmc:item:def-normal-curvature} $K^\perp$ is identically zero, i.e., the normal bundle is flat.

    \item[(iii)] The Ricci equation for the Riemann curvature tensor $\overline R$ of $M$ reads
    \begin{equation}\label{eq:Ricci}
      \langle\overline{R}(X, Y) H, \eta \rangle = \langle{[A_{\eta}, A_H]X},{Y}\rangle, \qquad X, Y \in \mathfrak{X}(\Sigma),\, \eta \in \mathfrak{X}^\perp(\Sigma).
    \end{equation}
  \end{enumerate}
\end{lemma}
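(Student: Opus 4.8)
The plan is to prove the four assertions in the order stated, each building on the previous. For the constancy of $|H|$, I would use that $\nabla^\perp$ is a metric connection on $T^\perp\Sigma$, so that for every $X\in\mathfrak{X}(\Sigma)$ one has $X\langle H,H\rangle = 2\langle\nabla^\perp_X H,H\rangle = 0$; hence $|H|^2$ is constant on the (connected) surface $\Sigma$. Since $H\not\equiv 0$ by Definition~\ref{def:pmc}, this constant is strictly positive, so $H$ vanishes nowhere and $H/|H|$ is a globally defined unit normal field which is $\nabla^\perp$-parallel, because $H$ is parallel and $|H|$ is a nonzero constant. This part uses no assumption on the codimension.

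For part (i), I would invoke that in codimension $2$ the normal bundle $T^\perp\Sigma$ is an oriented rank-two Riemannian vector bundle equipped with the metric connection $\nabla^\perp$, and therefore carries a canonical fibrewise rotation by a right angle: a bundle endomorphism $J$ with $J^2=-\mathrm{Id}$ determined by the fibre metric and the orientation, whose sign I would fix so that $\{J\xi,\xi\}$ is a positively oriented orthonormal basis of $T^\perp_p\Sigma$ for every unit normal $\xi$, in the sense of Definition~\ref{def:orientation-normal-bundle}. Since both the fibre metric and the orientation are $\nabla^\perp$-parallel, so is $J$; hence $\widetilde H:=|H|\,J(H/|H|)$ is a $\nabla^\perp$-parallel normal field, being a constant multiple of $J$ applied to the parallel field $H/|H|$, and $\{\widetilde H/|H|,H/|H|\}$ is positively oriented and orthonormal by construction. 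Uniqueness holds pointwise: at each point a unit normal vector orthogonal to $H/|H|$ and inducing the prescribed orientation is uniquely determined, so any parallel normal field with these properties must coincide with $\widetilde H$.

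Part (ii) is then immediate: $\{e_3,e_4\}:=\{\widetilde H/|H|,H/|H|\}$ is a global $\nabla^\perp$-parallel orthonormal frame of $T^\perp\Sigma$, so substituting $e_3$ and $e_4$ into~\eqref{eqn:normal-curvature-tensor} gives $R^\perp(X,Y)e_3=R^\perp(X,Y)e_4=0$ for all $X,Y\in\mathfrak{X}(\Sigma)$, whence $R^\perp\equiv 0$ and, in particular, $K^\perp\equiv 0$ by~\eqref{eqn:normal-curvature}. For part (iii), I would write down the general Ricci equation of the immersion, obtained by taking the normal component of $\overline R(X,Y)\eta=\overline{\nabla}_X\overline{\nabla}_Y\eta-\overline{\nabla}_Y\overline{\nabla}_X\eta-\overline{\nabla}_{[X,Y]}\eta$, repeatedly applying the Gauss and Weingarten formulas, and simplifying via self-adjointness of the shape operators together with the relation $\langle\sigma(X,Y),\eta\rangle=\langle A_\eta X,Y\rangle$; this yields $\langle\overline R(X,Y)\eta,\zeta\rangle=\langle R^\perp(X,Y)\eta,\zeta\rangle+\langle[A_\zeta,A_\eta]X,Y\rangle$ for all normal $\eta,\zeta$. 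Putting $\eta=H$, renaming $\zeta$ as $\eta$, and using $R^\perp\equiv 0$ from part (ii) gives precisely~\eqref{eq:Ricci}.

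The computations in the first and last steps are routine, so I expect the only delicate point to be part (i): checking carefully that the fibrewise right-angle rotation $J$ on the rank-two normal bundle is well defined, $\nabla^\perp$-parallel, and correctly oriented relative to Definition~\ref{def:orientation-normal-bundle}. Once this is in place, the parallelism and the uniqueness of $\widetilde H$ follow at once, and parts (ii) and (iii) are purely formal consequences.
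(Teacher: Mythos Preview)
Your proposal is correct and follows essentially the same route as the paper: both arguments use $\nabla^\perp$-compatibility with the metric for the constancy of $|H|$, the fibrewise right-angle rotation on the rank-two normal bundle to build $\widetilde H$, the resulting global parallel orthonormal normal frame to conclude $R^\perp\equiv 0$, and then the general Ricci equation specialised at $H$ to obtain~\eqref{eq:Ricci}. The only cosmetic difference is that the paper justifies the parallelism of $\widetilde H$ by noting it has constant length and is orthogonal to the parallel field $H$, whereas you argue that the rotation $J$ itself is $\nabla^\perp$-parallel; these are equivalent observations.
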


\begin{remark}
Flatness of the normal bundle of a \textsc{pmc} surface is a typical property in codimension $2$. If the codimension is bigger than $2$, it is possible to define likewise the normal sectional curvature of the normal bundle, but it does not necessarily vanish for \textsc{pmc} surfaces.
\end{remark}

\begin{proof}
Since $H$ is parallel in the normal bundle we have
\[
X(|H|^2) = 2\langle\overline{\nabla}_X H,H\rangle = 2\langle\nabla^\perp_X H,H\rangle = 0,
\]
for all $X \in \mathfrak{X}(\Sigma)$, so $|H|$ is constant on $\Sigma$. As for (i), the normal bundle is orientable in the sense of Definition~\ref{def:orientation-normal-bundle}, so we can define a rotation $R_p$ of angle $\pi/2$ in $T_p^\perp \Sigma$ such that $\{\eta, R_p\eta\}$ is positively oriented for all $\eta \in T_p^\perp \Sigma$. This rotation leaves the normal bundle of $\Sigma$ invariant, and should not be confused with a possible complex structure on $M$.

Hence $\widetilde{H} = -RH$ is such that $\{\widetilde{H}/|H|, H/|H|\}$ is a positively oriented global orthonormal frame of the normal bundle. Moreover, $\widetilde H$ is also parallel since it has constant length and is orthogonal to the parallel vector field $H$. Given a positively oriented orthonormal frame $\{e_1,e_2\}$ in $T\Sigma$, we can consider $e_3 = \widetilde{H}/|H|$ and $e_4 = H/|H|$, so Equation~\eqref{eqn:normal-curvature-tensor} and the fact that $\widetilde H$ is parallel yield
\begin{equation}\label{lm:first-properties-pmc:eqn1}
\begin{aligned}
|H| R^\perp(e_1, e_2)e_3&= \nabla^\perp_{e_1}\nabla^\perp_{e_2}\widetilde{H} - \nabla^\perp_{e_2}\nabla^\perp_{e_1}\widetilde{H} - \nabla^\perp_{[e_1, e_2]}\widetilde{H} = 0,\\
|H| R^\perp(e_1, e_2)e_4&= \nabla^\perp_{e_1}\nabla^\perp_{e_2}{H} - \nabla^\perp_{e_2}\nabla^\perp_{e_1}{H} - \nabla^\perp_{[e_1, e_2]}{H} = 0.\\
\end{aligned}
\end{equation}
From~\eqref{eqn:normal-curvature} and the first equation in~\eqref{lm:first-properties-pmc:eqn1}, we get that $K^\perp\equiv 0$, so (ii) is proved. Finally, given $X, Y \in \mathfrak{X}(\Sigma)$ and $\xi, \eta \in \mathfrak{X}^\perp(\Sigma)$, the Ricci equation reads $\overline{R}(X,Y,\xi, \eta) = R^\perp(X,Y,\xi, \eta) - \langle{[A_\xi, A_\eta]X},{Y}\rangle$, so (iii) is a consequence of taking $\xi=H$ in the Ricci equation and of the second identity in~\eqref{lm:first-properties-pmc:eqn1}.
\end{proof}

\section{The relation between \textsc{cmc} and \textsc{pmc} surfaces}

Parallel mean curvature surfaces are often considered the natural generalization to higher codimension of \textsc{cmc} surfaces in three-manifolds, so a leading idea in the study of \textsc{pmc} surfaces in four-manifolds is to reduce the codimension and rely on results for \textsc{cmc} surfaces. Our first approach to this idea will consist in finding natural assumptions on a hypersurface $N$ of a four-manifold $M$ guaranteeing that any \textsc{cmc} surface immersed in $N$ has parallel mean curvature vector in $M$. This is evident if $N$ is totally geodesic, but this condition can be relaxed as the following result ensures.

\begin{proposition}\label{prop:cmc-totally-umbilical}
Let $N$ be a totally umbilical \textsc{cmc} hypersurface of a four-manifold $M$. Then every \textsc{cmc} surface immersed in $N$ is either \textsc{pmc} or minimal in $M$.
\end{proposition}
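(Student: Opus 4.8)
The plan is to work with the unit normal $\xi$ to the hypersurface $N$ in $M$ and to decompose the mean curvature vector of the surface accordingly. Let $\phi:\Sigma\to N$ be a \textsc{cmc} surface, so that its mean curvature vector $H_N$ in $N$ has constant length, and let $\xi$ be a (locally defined) unit normal field to $N$ in $M$. Along $\Sigma$, the normal bundle $T^\perp\Sigma$ of $\phi$ in $M$ splits orthogonally as the span of the normal bundle of $\Sigma$ in $N$ plus the line spanned by $\xi$. The first step is to express the mean curvature vector $H$ of $\Sigma$ in $M$ in terms of $H_N$ and $\xi$: since $N$ is totally umbilical with some umbilicity function $\lambda$ (the shape operator of $N$ in $M$ is $\lambda\,\Id$), one computes $\sigma^M(e_i,e_i) = \sigma^N(e_i,e_i) + \lambda\,\xi$, and summing gives $H = H_N + \lambda\,\xi$. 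Because $N$ is \textsc{cmc} and totally umbilical in a space of the relevant type, $\lambda$ is constant along $N$ (this is where the \textsc{cmc}-plus-umbilical hypothesis is used), hence constant along $\Sigma$.

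The second step is to show $\nabla^\perp H = 0$, where $\nabla^\perp$ is the normal connection of $\Sigma$ in $M$. I would compute, for $X\in\mathfrak{X}(\Sigma)$,
\[
\nabla^\perp_X H = \bigl(\bnabla_X H\bigr)^\perp = \bigl(\bnabla_X H_N\bigr)^\perp + \lambda\,\bigl(\bnabla_X \xi\bigr)^\perp,
\]
using that $\lambda$ is constant. For the first term, split $\bnabla_X H_N$ into its components tangent to $N$ and along $\xi$; the part tangent to $N$ decomposes further into the $N$-normal-bundle part of $\Sigma$, which is $(\nabla^{\perp,N}_X H_N)$ and vanishes because $\Sigma$ is \textsc{cmc} in $N$, and a part tangent to $\Sigma$, which is killed by $(\cdot)^\perp$; the component of $\bnabla_X H_N$ along $\xi$ is $-\langle A^N_\xi X, H_N\rangle\,\xi = -\lambda\langle X,H_N\rangle\,\xi = 0$ since $H_N$ is normal to $\Sigma$ in $N$, i.e.\ orthogonal to $X$. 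For the second term, $\bnabla_X\xi = -A^N_\xi X = -\lambda X$ is tangent to $\Sigma$, so its $\perp$-component vanishes. Hence $\nabla^\perp H=0$.

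The final step is the trichotomy. If $H\equiv 0$, then $\Sigma$ is minimal in $M$. If $H$ is not identically zero, then by the computation above it is parallel in the normal bundle, and by Lemma~\ref{lm:first-properties-pmc} a nonzero parallel mean curvature vector has constant length and never vanishes, so $\Sigma$ is \textsc{pmc} in $M$; this matches Definition~\ref{def:pmc}. I expect the only delicate point to be the justification that the umbilicity factor $\lambda$ of a totally umbilical \textsc{cmc} hypersurface is genuinely constant (not merely constant mean curvature of $N$ giving constancy of $\tfrac{1}{3}\operatorname{tr}$ of something), so I would spell that out carefully — either from the hypothesis directly or via the Codazzi equation for $N$ in $M$ — since the whole argument that $H$ is parallel rests on $\nabla\lambda = 0$.
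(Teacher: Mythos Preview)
Your proof is correct and follows essentially the same approach as the paper's: both decompose $H$ as the sum of the mean curvature of $\Sigma$ in $N$ and the mean curvature vector of $N$ in $M$, and then show each piece contributes nothing to $\nabla^\perp H$ using the \textsc{cmc} and totally umbilical hypotheses (the paper works with $\widehat{H}$ directly and splits into the cases $\widehat{H}=0$ and $\widehat{H}\neq 0$, whereas your use of $\lambda\xi$ with $\lambda$ constant handles both at once). Your flagged concern is not actually delicate: for a totally umbilical hypersurface with $A_\xi=\lambda\,\Id$, the mean curvature is precisely $\lambda$, so the \textsc{cmc} hypothesis gives $\lambda$ constant immediately---no appeal to Codazzi is needed.
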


\begin{proof}
  Let $\phi: \Sigma \rightarrow N$ be a \textsc{cmc} immersion with second fundamental form $\widetilde{\sigma}$ and mean curvature vector $\widetilde{H}$. The immersion $\phi$ can also be regarded as an immersion into $M$, so let us denote by $\sigma$ and $H$ the second fundamental form and the mean curvature vector of the immersion $\phi:\Sigma\to M$, respectively. We will also define $\widehat{\sigma}$ and $\widehat{H}$ as the second fundamental form and the mean curvature vector of $N$ as a hypersurface of $M$, respectively.

  Since $\sigma = \widetilde{\sigma} + \widehat{\sigma}$, taking the trace on $\Sigma$ we get that $2H = 2\widetilde{H} + 3\widehat{H} - \widehat{\sigma}(\eta, \eta)$, where $\eta$ is a unit normal vector field to $\phi(\Sigma)$ tangent to $N$. Taking into account that $N$ is totally umbilical, i.e.,\ $\widehat{\sigma}(x,y) = \langle{x,y}\rangle\widehat{H}$ for all $x, y \in TN$, we finally get that $H = \widetilde{H} + \widehat{H}$. Taking the derivative of this last equation with respect to a tangent vector field $V \in \mathfrak{X}(\Sigma)$ gives
  \[
  \begin{split}
    \nabla^\perp_V H &= \bigl(\overline{\nabla}_V (\widetilde{H} + \widehat{H})\bigr)^{\perp} = (\overline{\nabla}_V \widetilde{H})^{\perp} + (\overline{\nabla}_V \widehat{H})^{\perp} \\
    &= \bigl( \nabla^N_V \widetilde{H} + \widehat{\sigma}(V, \widetilde{H}) \bigr)^{\perp} + (\overline{\nabla}_V \widehat{H})^{\perp} \\
    &= (\nabla_V^N \widetilde{H})^\perp + (\langle V,\widetilde{H}\rangle\widetilde{H})^\perp + (\overline{\nabla}_V \widehat{H})^{\perp} = (\overline{\nabla}_V \widehat{H})^{\perp},
  \end{split}
  \]
  where $\nabla^N$ is the Levi-Civita connection of $N$ and we have taken into account that $(\nabla_V^N \widetilde{H})^\perp=0$ since $\widetilde{H}$ has constant length. We distinguish two cases:
\begin{itemize}
 \item If $\widehat H=0$ ($N$ is a totally geodesic hypersurface of $M$), then $\nabla^\perp_V H=(\overline{\nabla}_V \widehat{H})^{\perp}=0$ and $H$ is parallel, so we are done.
 \item Assume now that $\widehat H\neq 0$. Observe that $\langle\overline{\nabla}_V \widehat{H},\widehat{H}\rangle=0$ since $\widehat H$ has constant length, so $(\overline{\nabla}_V \widehat{H})^{\perp}$ is proportional to a unit vector field $\eta$, normal to $\Sigma$, but tangent to $N$. Hence
  \[
  \begin{split}
    (\overline{\nabla}_V \widehat{H})^{\perp} &= \langle\overline{\nabla}_V \widehat{H},\eta\rangle\eta = -\langle \widehat{H}, \overline{\nabla}_V \eta \rangle \eta = -\langle \widehat{H},\widehat{\sigma}(V, \eta) \rangle\eta \\
    &= -\langle \widehat H , \langle V,\eta\rangle \widehat{H} \rangle\eta = 0,
    \end{split}
  \]
where we used again that $N$ is a totally umbilical hypersurface in $M$.\qedhere
\end{itemize}
\end{proof}

\begin{remark}~ \label{rmk:cmc-totally-umbilical-proof}
Under the assumptions of Proposition~\ref{prop:cmc-totally-umbilical}, the mean curvature vector $H$ of $\Sigma$ in $M$ is just the sum of the mean curvature vector $\widetilde{H}$ of $\Sigma$ in $N$ and the mean curvature vector $\widehat{H}$ of $N$ in $M$, i.e., we have the orthogonal decomposition $H = \widetilde{H} + \widehat{H}$. Hence $\Sigma$ is \textsc{pmc} if and only if $\nabla^{\perp}H = 0$ and $\Sigma$ is not minimal in $N$ or $N$ is not totally geodesic in $M$.
\end{remark}

Let us analyse how Proposition~\ref{prop:cmc-totally-umbilical} can be applied in different four-manifolds where totally umbilical surfaces are classified in order to construct \textsc{pmc} surfaces.
\begin{enumerate}
 \item In the space forms $\mathbb{R}^4$, $\mathbb{S}^4$ and $\mathbb{H}^4$, totally umbilical hypersurfaces have constant sectional curvature and constant mean curvature. Hence, the \textsc{pmc} surfaces provided by Proposition~\ref{prop:cmc-totally-umbilical} are \textsc{cmc} surfaces in the three-dimensional space forms $\mathbb{R}^3$, $\mathbb{S}^3$ or $\mathbb{H}^3$ embedded totally umbilically in the four-dimensional space form.

 \item There are no totally umbilical hypersurfaces in the complex space forms $\mathbb{C}\mathrm{P}^2$ and $\mathbb{C}\mathrm{H}^2$ \cite{TT63}. This is one of the difficulties when trying to produce examples of \textsc{pmc} immersions. In fact, there are no \textsc{pmc} spheres in $\mathbb{C}\mathrm{P}^2$ or in $\mathbb{C}\mathrm{H}^2$ (cf.\ Theorem~\ref{thm:pmc-spheres-complex-space-forms}).

 \item In $\mathbb{S}^3\times \mathbb{R}$ and $\mathbb{H}^3\times \mathbb{R}$ there are plenty of totally umbilical hypersurfaces since both spaces are locally conformally flat, but only the totally geodesic ones have constant mean curvature~\cite{MT2014}. Since totally geodesic submanifolds in a product are the product or totally geodesic submanifolds, we conclude that such totally geodesic hypersurfaces are locally congruent to $\mathbb{S}^3$, $\mathbb{H}^3$, $\mathbb{S}^2\times\mathbb{R}$, or $\mathbb{H}^2\times\mathbb{R}$.

 \item In a Riemannian product $\mathbb M^2(c_1) \times\mathbb M^2(c_2)$ of two surfaces of constant Gaussian curvatures $c_1$ and $c_2$, with $(c_1,c_2)\neq(0,0)$, the only totally umbilical hypersurfaces with constant mean curvature are totally geodesic. Hence, they are open subsets of products of one surface and a geodesic in the other surface. This was proven for $\mathbb{S}^2\times \mathbb{S}^2$ and $\mathbb{H}^2\times \mathbb{H}^2$, where both factors have the same curvature, in~\cite{TU12}, but the proof can easily be extended to the other cases.

 \item Consider $\mathbb{E}(\kappa,\tau)\times\mathbb{R}$, the Riemannian product of a homogeneous three-space with the Euclidean line. If $\kappa-4\tau^2 = 0$, the first factor has constant sectional curvature and the classification of totally umbilical hypersurfaces with constant mean curvature has been treated in item 3. If $\tau = 0$ (and $\kappa \neq 0$), the first factor is either $\mathbb S^2 \times \mathbb R$ or $\mathbb H^2 \times \mathbb R$, so the space under consideration is either $\mathbb S^2 \times \mathbb R^2$ or $\mathbb H^2 \times \mathbb R^2$, which have been treated in item 4. In all other cases, if was proven in \cite{ST09,VdV08} that there are no totally umbilical surfaces in $\mathbb{E}(\kappa,\tau)$, so the only totally umbilical hypersurfaces of $\mathbb{E}(\kappa,\tau)\times\mathbb{R}$ are open parts of the slices $\mathbb{E}(\kappa,\tau)\times\{t_0\}$. A more general result in $G\times\mathbb{R}$, where $G$ is a simply connected three-dimensional Lie group endowed with a left-invariant metric, follows from the classification of totally umbilical surfaces in $G$ (see~\cite{MS2015}).
\end{enumerate}

\section{Quadratic differentials and the classification of \textsc{pmc} spheres}
\label{sec:holomorphic-differentials}

As in the theory of \textsc{cmc} surfaces in homogeneous Riemannian three-manifolds, the existence of quadratic differentials that are holomorphic for \textsc{pmc} immersions comes in handy in some ambient four-manifolds. For instance, in symmetric four-manifolds such as
\begin{itemize}
  \item the space forms $\mathbb{R}^4$, $\mathbb{S}^4$ and $\mathbb{H}^4$,
  \item the complex hyperbolic and projective spaces $\mathbb{C}\mathrm{P}^2$ and $\mathbb{C}\mathrm{H}^2$,
  \item the Riemannian products $\mathbb{S}^2\times \mathbb{S}^2$ and $\mathbb{H}^2\times \mathbb{H}^2$,
\end{itemize}
it is possible to define two holomorphic quadratic differentials for \textsc{pmc} surfaces. It is also hitherto possible to define one holomophic quadratic differential in a few other cases, such as in $\mathbb M^3(c) \times \mathbb{R}$ and $\mathbb M^2(c_1) \times \mathbb M^2(c_2)$ (de Lira and Vit\'{o}rio~\cite{LV08} and Kowalczyk \cite{Kowalczyk2011}), or in Sasakian space forms (Rosenberg and Fetcu~\cite{FH2014}). This is instrumental, for instance, in the classification of \textsc{pmc} spheres in the aforementioned spaces, for the fact that a non-trivial holomorphic differential vanishes often gives precious information.

Throughout this section, we will consider a \textsc{pmc} immersion $\phi:\Sigma\rightarrow M$ of an oriented surface $\Sigma$ into a four-manifold $M$ with second fundamental form $\sigma$. As in the previous section, $\nabla$ and $\overline\nabla$ will denote the Levi-Civita connections in $\Sigma$ and $M$, respectively, and $\overline R$ will stand for the Riemann curvature tensor of $M$. Also, $z = x + iy$ will be a conformal parameter on $\Sigma$ with conformal factor $e^{2u}$, giving rise to the usual basic vectors $\partial_z = \frac{1}{2}(\partial_x - i\partial_y)$ and $\partial_{\bar{z}} = \frac{1}{2}(\partial_x + i\partial_y)$.

\begin{lemma}\label{lm:properties-conformal-parameter-and-second-fundamental-form}
Under the previous assumptions, the following formulae hold:
\begin{enumerate}
  \item[(i)] $\langle{\partial_z},{\partial_{\bar{z}}}\rangle = \frac{1}{2}e^{2u}$ and $\langle{\partial_z},{\partial_z}\rangle = 0$.
  \item[(ii)] $\nabla_{\partial_z} \partial_{\bar{z}} = \nabla_{\partial_{\bar{z}}}\partial_z = 0$ and $\nabla_{\partial_z} \partial_z = 2u_z \partial_z$.
  \item[(iii)] $2\sigma(\partial_{\bar{z}}, \partial_z) = e^{2u}H$.
  \item[(iv)] $\langle{\sigma(\partial_z, \partial_z)},{\eta}\rangle_{\bar{z}} = \langle{\overline{R}(\partial_{\bar{z}}, \partial_z)\partial_z},{\eta}\rangle$ for any parallel normal section $\eta$.
\end{enumerate}
\end{lemma}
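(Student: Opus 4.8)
The plan is to complexify the metric, the Levi-Civita connection $\nabla$ and the second fundamental form $\sigma$ by $\C$-bilinear extension, and to work with the real isothermal orthonormal frame $e_1=e^{-u}\partial_x$, $e_2=e^{-u}\partial_y$. Part (i) is then immediate: by definition of a conformal parameter the induced metric is $e^{2u}(\df x^2+\df y^2)$, so $\langle\partial_x,\partial_x\rangle=\langle\partial_y,\partial_y\rangle=e^{2u}$ and $\langle\partial_x,\partial_y\rangle=0$; substituting $\partial_z=\tfrac12(\partial_x-i\partial_y)$, $\partial_{\bz}=\tfrac12(\partial_x+i\partial_y)$ and expanding bilinearly gives $\langle\partial_z,\partial_z\rangle=0$ and $\langle\partial_z,\partial_{\bz}\rangle=\tfrac12 e^{2u}$ at once.

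For (ii) I would feed $\partial_z,\partial_{\bz}$ into the Koszul formula. Since $[\partial_z,\partial_{\bz}]=0$ and, by (i), $\langle\partial_z,\partial_z\rangle=\langle\partial_{\bz},\partial_{\bz}\rangle=0$, essentially all Koszul terms collapse: one finds $\langle\nabla_{\partial_z}\partial_{\bz},\partial_z\rangle=\langle\nabla_{\partial_z}\partial_{\bz},\partial_{\bz}\rangle=0$, hence $\nabla_{\partial_z}\partial_{\bz}=0$, and $\nabla_{\partial_{\bz}}\partial_z=0$ follows from torsion-freeness. For $\nabla_{\partial_z}\partial_z$ the only surviving Koszul term is $\langle\nabla_{\partial_z}\partial_z,\partial_{\bz}\rangle=\partial_z\langle\partial_z,\partial_{\bz}\rangle=u_z e^{2u}$, while $\langle\nabla_{\partial_z}\partial_z,\partial_z\rangle=0$; expressing $\nabla_{\partial_z}\partial_z$ in the basis $\{\partial_z,\partial_{\bz}\}$ and comparing yields $\nabla_{\partial_z}\partial_z=2u_z\partial_z$. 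Part (iii) is pure bookkeeping: expanding $2\sigma(\partial_{\bz},\partial_z)$ and using the symmetry of $\sigma$ kills the mixed term and leaves $\tfrac12(\sigma(\partial_x,\partial_x)+\sigma(\partial_y,\partial_y))=\tfrac12 e^{2u}(\sigma(e_1,e_1)+\sigma(e_2,e_2))=e^{2u}H$.

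Part (iv) is the only step needing a genuine geometric input, namely the Codazzi equation $(\overline R(X,Y)Z)^\perp=(\nabla^\perp_X\sigma)(Y,Z)-(\nabla^\perp_Y\sigma)(X,Z)$. Since $\eta$ is parallel, $\langle\sigma(\partial_z,\partial_z),\eta\rangle_{\bz}=\langle\nabla^\perp_{\partial_{\bz}}(\sigma(\partial_z,\partial_z)),\eta\rangle$, and using $\nabla_{\partial_{\bz}}\partial_z=0$ from (ii) this equals $\langle(\nabla^\perp_{\partial_{\bz}}\sigma)(\partial_z,\partial_z),\eta\rangle$. Applying Codazzi with $(X,Y,Z)=(\partial_{\bz},\partial_z,\partial_z)$ gives $(\nabla^\perp_{\partial_{\bz}}\sigma)(\partial_z,\partial_z)=(\overline R(\partial_{\bz},\partial_z)\partial_z)^\perp+(\nabla^\perp_{\partial_z}\sigma)(\partial_{\bz},\partial_z)$, so everything reduces to showing the last term vanishes. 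Here (ii), (iii) and the \pmc hypothesis combine: $(\nabla^\perp_{\partial_z}\sigma)(\partial_{\bz},\partial_z)=\nabla^\perp_{\partial_z}(\sigma(\partial_{\bz},\partial_z))-\sigma(\partial_{\bz},\nabla_{\partial_z}\partial_z)=\nabla^\perp_{\partial_z}(\tfrac12 e^{2u}H)-2u_z\cdot\tfrac12 e^{2u}H=u_z e^{2u}H-u_z e^{2u}H=0$, where $\nabla^\perp H=0$ is used. Pairing with the normal field $\eta$ then gives exactly (iv). The main obstacle is not computational at all but clerical: keeping the $\C$-bilinear extensions and the sign conventions in the Gauss--Weingarten and Codazzi equations consistent so that the cancellation in the last display comes out cleanly.
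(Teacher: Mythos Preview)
Your proposal is correct and follows essentially the same route as the paper's own proof: (i)--(iii) are handled by the conformal-coordinate identities, Koszul's formula, and a direct expansion, while (iv) comes from the Codazzi equation together with (ii), (iii) and $\nabla^\perp H=0$, exactly as in the paper. The only cosmetic difference is that the paper writes (iv) as a single chain of equalities (and uses the notation $(\overline\nabla_X\sigma)$ for the covariant derivative of $\sigma$), whereas you isolate the key step $(\nabla^\perp_{\partial_z}\sigma)(\partial_{\bar z},\partial_z)=0$ explicitly; the content is identical.
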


\begin{proof}
(i) is a consequence of $z$ being a conformal parameter, (ii) is a direct computation using Koszul's formula and (iii) is straightforward from the definition of $\partial_z$ and $\partial_{\bar{z}}$. We prove (iv):
\[
\begin{split}
\langle{\sigma(\partial_z, \partial_z)},{\eta}\rangle_{\bar{z}} &= \langle{\nabla^\perp_{\partial_{\bar{z}}} \sigma(\partial_z, \partial_z)},{\eta}\rangle + \langle{\sigma(\partial_z, \partial_z)},{\nabla^\perp_{\partial_{\bar{z}}} \eta}\rangle \\
&= \langle{(\overline{\nabla}_{\partial_{\bar{z}}} \sigma)(\partial_z, \partial_z) + 2\sigma(\nabla_{\partial_{\bar{z}}}\partial_z, \partial_z)},{\eta}\rangle \\
&= \langle{(\overline{\nabla}_{\partial_z} \sigma)(\partial_{\bar{z}}, \partial_z) + \overline{R}(\partial_{\bar{z}}, \partial_z)\partial_z},{\eta}\rangle \\
&= \langle{\nabla^\perp_{\partial_z} \sigma(\partial_{\bar{z}}, \partial_z) - \sigma(\nabla_{\partial_z}\partial_{\bar{z}}, \partial_z) - \sigma(\partial_{\bar{z}}, \nabla_{\partial_z}\partial_z) + \overline{R}(\partial_{\bar{z}}, \partial_z)\partial_z},{\eta}\rangle \\
&= \langle{\nabla^\perp_{\partial_z} (\tfrac{1}{2} e^{2u} H) - u_z e^{2u}H + \overline{R}(\partial_{\bar{z}}, \partial_z)\partial_z},{\eta}\rangle = \langle{\overline{R}(\partial_{\bar{z}}, \partial_z)\partial_z},{\eta}\rangle,
\end{split}
\]
where we have taken into account (ii), (iii), the fact that $\eta$ is parallel, the definition of the covariant derivative of $\sigma$ and the Codazzi equation $(\overline{\nabla}_X \sigma)(Y, Z) - (\overline{\nabla}_Y \sigma)(X, Z) = (\overline{R}(X,Y)Z)^\perp$.
\end{proof}

\subsection{Space forms}\label{subsec:holomorphic-differentials-space-forms}
Let $M = \mathbb M^4(c)$ be the space form of constant sectional curvature $c\in\mathbb{R}$, and define in conformal coordinates the quadratic differentials
\begin{equation}\label{eq:quadratic-differentials-space-forms}
\begin{aligned}
\Theta(z) &= \langle{\sigma(\partial_z, \partial_z)},{H}\rangle \mathrm{d} z\otimes \mathrm{d} z,\\
\widetilde{\Theta}(z) &= \langle\sigma(\partial_z, \partial_z),\widetilde{H}\rangle \mathrm{d} z\otimes \mathrm{d} z,\end{aligned}
\end{equation}
where $\widetilde{H}$ is given in Lemma~\ref{lm:first-properties-pmc}. Equation~\eqref{eq:quadratic-differentials-space-forms} defines globally $\Theta$ and $\widetilde{\Theta}$, i.e., their expressions do not depend upon the choice of the conformal parameter.

\begin{proposition}\label{prop:holomorphic-differentials-pmc-space-forms}
Let $\phi: \Sigma \rightarrow\mathbb  M^4(c)$ be a parallel mean curvature immersion of an oriented surface $\Sigma$. Then $\Theta$ and $\widetilde{\Theta}$ defined by~\eqref{eq:quadratic-differentials-space-forms} are holomorphic quadratic differentials.
\end{proposition}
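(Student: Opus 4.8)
The plan is to use the standard fact that a quadratic differential written locally as $f(z)\,\mathrm{d}z\otimes\mathrm{d}z$ is holomorphic precisely when $f_{\bar z}=0$. Since Equation~\eqref{eq:quadratic-differentials-space-forms} has already been declared to define $\Theta$ and $\widetilde\Theta$ globally, it is enough to verify this Cauchy--Riemann condition in one arbitrary conformal chart for the two coefficient functions $\langle\sigma(\partial_z,\partial_z),H\rangle$ and $\langle\sigma(\partial_z,\partial_z),\widetilde H\rangle$.

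For this I would invoke Lemma~\ref{lm:properties-conformal-parameter-and-second-fundamental-form}(iv), which is tailor-made for the job: it gives
$\langle\sigma(\partial_z,\partial_z),\eta\rangle_{\bar z}=\langle\overline R(\partial_{\bar z},\partial_z)\partial_z,\eta\rangle$
for every parallel normal section $\eta$. By Lemma~\ref{lm:first-properties-pmc}(i), both $H$ and $\widetilde H$ are parallel in the normal bundle, so this identity applies with $\eta=H$ and with $\eta=\widetilde H$. Thus holomorphicity of both differentials reduces to showing that the normal component of $\overline R(\partial_{\bar z},\partial_z)\partial_z$ vanishes.

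The final step uses that $M=\mathbb M^4(c)$ is a space form, so $\overline R(X,Y)Z=c\bigl(\langle Y,Z\rangle X-\langle X,Z\rangle Y\bigr)$. Taking $X=\partial_{\bar z}$ and $Y=Z=\partial_z$ and using Lemma~\ref{lm:properties-conformal-parameter-and-second-fundamental-form}(i), namely $\langle\partial_z,\partial_z\rangle=0$ and $\langle\partial_z,\partial_{\bar z}\rangle=\tfrac12 e^{2u}$, one gets $\overline R(\partial_{\bar z},\partial_z)\partial_z=-\tfrac{c}{2}e^{2u}\,\partial_z$, which is tangent to $\Sigma$. Its inner product with any normal vector field is therefore zero, so $\langle\sigma(\partial_z,\partial_z),H\rangle_{\bar z}=\langle\sigma(\partial_z,\partial_z),\widetilde H\rangle_{\bar z}=0$, and both $\Theta$ and $\widetilde\Theta$ are holomorphic.

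There is no genuine obstacle here: the statement is essentially a two-line consequence of Lemma~\ref{lm:properties-conformal-parameter-and-second-fundamental-form}(iv) together with the explicit curvature tensor of a space form. The only points needing a little care are (a) that $\widetilde H$, and not merely $H$, is parallel --- which is exactly the content of Lemma~\ref{lm:first-properties-pmc}(i) and the very reason the second differential can be defined --- and (b) that the local coefficient expressions glue to well-defined global tensors, which is already granted in the discussion following Equation~\eqref{eq:quadratic-differentials-space-forms}.
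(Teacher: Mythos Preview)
Your proof is correct and follows essentially the same approach as the paper: the paper's proof is simply the one-line observation that $\langle\overline R(\partial_{\bar z},\partial_z)\partial_z,\eta\rangle=0$ for any normal $\eta$ in a space form, together with Lemma~\ref{lm:properties-conformal-parameter-and-second-fundamental-form}. You have merely unpacked this by computing $\overline R(\partial_{\bar z},\partial_z)\partial_z$ explicitly and noting it is tangent, which is exactly the intended argument.
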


\begin{proof}
Taking into account that $\langle{\overline{R}(\partial_{\bar{z}}, \partial_z)\partial_z},{\eta}\rangle$ is zero for any normal vector field $\eta$ in a space form, the statement follows from Lemma~\ref{lm:properties-conformal-parameter-and-second-fundamental-form}.
\end{proof}

\begin{theorem}[{Ferus~\cite{Ferus71}, see also~\cite[Theorem 2.2]{Hoffman1973}}]
\label{thm:pmc-spheres-space-forms}
Let $\phi: S \rightarrow \mathbb M^4(c)$ be a \textsc{pmc} immersion of a sphere $S$ in a space form. Then $\phi(S)$ is contained in a totally umbilical hypersurface of $\mathbb{M}^4(c)$ as a minimal surface.
\end{theorem}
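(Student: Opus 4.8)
The plan is to combine the two holomorphic quadratic differentials $\Theta$ and $\widetilde{\Theta}$ of Proposition~\ref{prop:holomorphic-differentials-pmc-space-forms} with the topology of $S$. Since $S$ is a sphere, it carries no nonzero holomorphic quadratic differential (the relevant line bundle has negative degree $-4$), so $\Theta\equiv 0$ and $\widetilde{\Theta}\equiv 0$ on $S$. By~\eqref{eq:quadratic-differentials-space-forms} this means $\langle\sigma(\partial_z,\partial_z),H\rangle\equiv 0$ and $\langle\sigma(\partial_z,\partial_z),\widetilde{H}\rangle\equiv 0$ in every conformal chart. As $\phi$ is \textsc{pmc}, $H$ never vanishes (Lemma~\ref{lm:first-properties-pmc}) and $\{H/|H|,\widetilde{H}/|H|\}$ is a global orthonormal frame of the normal bundle, so these two conditions together force $\sigma(\partial_z,\partial_z)\equiv 0$.

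Next I would translate $\sigma(\partial_z,\partial_z)=0$ into the vanishing of the traceless part of the second fundamental form: expanding $\partial_z=\tfrac12(\partial_x-i\partial_y)$ gives $\sigma(\partial_x,\partial_x)=\sigma(\partial_y,\partial_y)$ and $\sigma(\partial_x,\partial_y)=0$, and combining this with $2\sigma(\partial_{\bar z},\partial_z)=e^{2u}H$ from Lemma~\ref{lm:properties-conformal-parameter-and-second-fundamental-form}(iii) yields $\sigma(e_i,e_j)=\delta_{ij}H$ in the orthonormal frame $e_i=e^{-u}\partial_{x_i}$. Thus $\phi$ is a \emph{totally umbilical} immersion into $\mathbb M^4(c)$, with $A_H=|H|^2\Id$ and $A_{\widetilde{H}}\equiv 0$; in particular, by the Gauss equation $S$ has constant curvature $\kappa:=c+|H|^2$, which is positive since $S$ is closed (Gauss--Bonnet), so $(S,\langle\,,\,\rangle)$ is a round sphere.

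It then remains to invoke the classical rigidity of totally umbilical submanifolds of space forms. Since the first normal space $\langle H\rangle$ has constant rank one and is parallel in the normal connection ($H$ being parallel) --- equivalently, $\widetilde{H}/|H|$ is a parallel normal field with vanishing shape operator, hence a parallel section of $T\mathbb M^4(c)$ along $S$ --- a reduction of codimension argument places $\phi(S)$ inside a totally geodesic $\mathbb M^3(c)\subset\mathbb M^4(c)$, where it is a totally umbilical, non-minimal \textsc{cmc} surface, \ie a geodesic sphere of intrinsic curvature $\kappa$. Finally, let $N$ be the geodesic sphere of $\mathbb M^4(c)$ with the same centre and radius: $N$ is a totally umbilical hypersurface of $\mathbb M^4(c)$ (isometric to $\mathbb M^3(\kappa)$), we have $\phi(S)=N\cap\mathbb M^3(c)$, and since this totally geodesic $\mathbb M^3(c)$ passes through the centre of $N$, the intersection is a totally geodesic --- hence minimal --- surface of $N$.

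The crux of the argument is the first paragraph: it is the simultaneous vanishing of \emph{both} holomorphic differentials on a sphere that collapses the whole second fundamental form to total umbilicity, not merely its $H$-component; everything afterwards is standard surface theory in space forms. The only mildly delicate point is the choice, in the last step, of the specific umbilical hypersurface $N$ in which $\phi(S)$ is \emph{minimal} rather than just \textsc{cmc}, which is exactly what the ``same centre and radius'' prescription arranges.
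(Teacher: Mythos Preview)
Your argument is correct, but it differs from the paper's in a way worth noting. The paper (which writes out only the case $c=0$) uses \emph{only} the vanishing of $\Theta$ to obtain pseudo-umbilicity $A_H=|H|^2\Id$, and then shows directly that the map $p\mapsto H_p+|H|^2\phi(p)$ is constant on $S$; this places $\phi(S)$ on a round hypersphere of radius $1/|H|$, in which it is automatically minimal by the decomposition $H=\widetilde H+\widehat H$ of Remark~\ref{rmk:cmc-totally-umbilical-proof}. The paper then explicitly records (Remark~\ref{rmk:pmc-space-forms-Theta-zero}) that only one of the two holomorphic differentials was needed. You instead use the vanishing of \emph{both} $\Theta$ and $\widetilde\Theta$ to force $\sigma(\partial_z,\partial_z)=0$, i.e.\ full total umbilicity, then reduce codimension via $A_{\widetilde H}=0$, and finally locate the correct umbilical hypersphere by a ``same centre and radius'' construction.

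Both routes are valid. Yours is arguably more symmetric---total umbilicity is a cleaner conclusion than pseudo-umbilicity, and the reduction-of-codimension step is standard---but your closing remark that ``the crux is the simultaneous vanishing of both holomorphic differentials'' is misleading: the paper's point is precisely that $\Theta\equiv 0$ alone already suffices, which matters later in ambient spaces such as $\mathbb{M}^3(c)\times\mathbb{R}$ or $\mathbb{M}^2(c_1)\times\mathbb{M}^2(c_2)$ where only one holomorphic differential is available. The paper's approach also produces the umbilical hypersurface $N$ in one stroke from the constancy of $H+|H|^2\phi$, bypassing your two-stage ``reduce to a totally geodesic $\mathbb{M}^3(c)$, then re-embed in the right geodesic sphere'' detour.
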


\begin{proof}
  For illustration purposes, we will prove the case $c = 0$, that is, $\mathbb M^4(0) = \mathbb{R}^4$. Since $S$ is a sphere and $\phi$ is a \textsc{pmc} immersion, both $\Theta$ and $\widetilde{\Theta}$ defined in~\eqref{eq:quadratic-differentials-space-forms} vanish. Since $\Theta = 0$, we obtain that $A_H = |H|^2\mathrm{Id}$ (i.e., $\phi$ is \emph{pseudo-umbilical}).

  Arguing as in the classical proof that complete and connected totally umbilical surfaces in $\mathbb{R}^3$ are spheres or planes, we consider the function $f: S \to \mathbb{R}^4$ given by $f(p) = H_p + |H_p|^2\phi(p)$. For any tangent vector field $V \in \mathfrak{X}(S)$ we get
  \[
    V(f) = V(H + |H|^2\phi) = \overline{\nabla}_V H + |H|^2 V = -A_H V + \nabla^\perp_V H + |H|^2V = 0,
  \]
  by using the pseudo-umbilicity, and identifying $T_pS$ with its image by $\mathrm{d} \phi$ in $T_{\phi(p)}\mathbb M^4(c)$. Hence $f$ is constant $a\in \mathbb{R}^4$, so the immersion satisfies
  \[
    \left\lvert\phi - \frac{a}{|H|^2}\right\rvert^2 = \frac{1}{|H|^2}.
  \]
  This means that $\phi(S)$ is contained in a sphere $\mathbb{S}^3\subset\mathbb{R}^4$ of radius $1/|H|$, which is totally umbilical in $\mathbb{R}^4$ with mean curvature $|\widehat{H}| = |H|$. Thus the mean curvature $\widetilde{H} = H - \widehat{H}$ of $S$ as a surface of $\mathbb{S}^3$ is zero (observe that $H$ and $\widehat{H}$ have the same length, and $\widetilde{H}$ and $\widehat{H}$ are orthogonal, see Remark~\ref{rmk:cmc-totally-umbilical-proof}).
\end{proof}

\begin{remark}\label{rmk:pmc-space-forms-Theta-zero}
In the proof of Theorem~\ref{thm:pmc-spheres-space-forms} we have only used one of the holomorphic differentials associated to the \textsc{pmc} immersion to get the result. Nevertheless, both holomorphic differentials will be needed to get a complete classification of \textsc{pmc} immersions in space forms (cf.\ Theorem~\ref{thm:classification-pmc-space-forms}) as well as to classify \textsc{pmc} spheres in $\mathbb{S}^2\times \mathbb{S}^2$ and $\mathbb{H}^2\times \mathbb{H}^2$ (cf.\ Theorem~\ref{thm:pmc-spheres-Riemannian-products}).

Besides, \cite{AdCT2010} showed that the spheres are not the only \textsc{pmc} surfaces in space forms for which the quadratic differential $\Theta$ vanishes identically: there is also a complete non-flat example in $\mathbb{H}^n$ with non-negative Gaussian curvature (cf.\ Remark~\ref{rmk:pmc-S2xS2-H2xH2-Theta-zero}).
\end{remark}

\subsection{Complex hyperbolic and projective spaces}\label{subsec:holomorphic-differentials-complex-spaces}

Let us consider $M=\mathbb{C}\mathrm{M}^2(c)$, i.e., the complex projective or hyperbolic space of constant holomorphic curvature $c$, also including $\mathbb{C}^2 = \mathbb{C}\mathrm{M}^2(0)$. The situation in the complex space forms is quite similar to that of real space forms, due to the fact that Fetcu~\cite{Fetcu2010} defined a couple of holomorphic quadratic differentials associated with \textsc{pmc} immersions in $\mathbb{C}\mathrm{M}^2(c)$.

The Riemann tensor of these spaces reads
\begin{equation}\label{eq:Riemann-tensor-complex-space-forms}
  \overline{R}(X,Y)Z = \frac{c}{4}\Bigl\{ \langle{Y},{Z}\rangle X - \langle{X},{Z}\rangle Y + \langle{JY},{Z}\rangle JX - \langle{JX},{Z}\rangle JY - 2\langle{X},{JY}\rangle JZ\Bigr\},
\end{equation}
where $J:\mathfrak{X}(M) \rightarrow \mathfrak{X}(M)$ is the complex structure, which satisfies:
\begin{enumerate}
  \item $J^2 = -\mathrm{Id}$.
  \item $J$ is an isometry, i.e., $\langle{JX},{JY}\rangle = \langle{X},{Y}\rangle$.
  \item $J$ is parallel, i.e., $\overline{\nabla}_X JY = J\overline{\nabla}_X Y$, being $\overline{\nabla}$ the Levi-Civita connection of $\mathbb{C}\mathrm{M}^2(c)$.
\end{enumerate}

\begin{proposition} [{\cite[Proposition~2.3 and Section~3.1]{Fetcu2010}}]
Let $\phi: \Sigma \to \mathbb{C}\mathrm{M}^2(c)$ be a \textsc{pmc} immersion of an oriented suface $\Sigma$, and let $z = x+iy$ be a conformal parameter on $\Sigma$. Then
\begin{equation}\label{eq:Hopf-diferentials-complex-space-forms}
\begin{split}
\Theta(z) &= \bigl(8|H|^2 \langle\sigma(\partial_z, \partial_z),H\rangle) + 3c\langle J\phi_z,H\rangle^2\bigl)\mathrm{d} z \otimes \mathrm{d} z, \\
\widetilde{\Theta}(z) &= \bigl(8i|H|^2 \langle{\sigma(\partial_z, \partial_z),\widetilde{H}}\rangle) + 3c\langle J\phi_z,\widetilde{H}\rangle^2\bigl)\mathrm{d} z \otimes \mathrm{d} z,
\end{split}
\end{equation}
define two quadratic holomorphic differentials on $\Sigma$.
\end{proposition}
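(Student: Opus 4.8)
The plan is to check two things for each expression in~\eqref{eq:Hopf-diferentials-complex-space-forms}: that its coefficient, paired with $\mathrm{d}z\otimes\mathrm{d}z$, is invariant under holomorphic reparametrizations, and that the coefficient is a holomorphic function of $z$. The first is immediate once one tracks how the factors scale under $w=w(z)$: $\langle\sigma(\partial_z,\partial_z),\eta\rangle$ and $\langle J\phi_z,\eta\rangle^2$ each pick up $(w'(z))^2$, while $|H|^2$ and $c$ are unchanged, exactly as in the space-form discussion after~\eqref{eq:quadratic-differentials-space-forms}. So the real content is holomorphicity, i.e.\ that the $\partial_{\bar z}$-derivative of each coefficient vanishes. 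The tools are already on the table: by Lemma~\ref{lm:first-properties-pmc}, $|H|$ is constant and both $H$ and $\widetilde H$ are \emph{parallel} normal fields; Lemma~\ref{lm:properties-conformal-parameter-and-second-fundamental-form} provides the conformal identities and, crucially, part~(iv), namely $\langle\sigma(\partial_z,\partial_z),\eta\rangle_{\bar z}=\langle\overline{\nabla}{\,}\text{-curvature term}\rangle=\langle\overline R(\partial_{\bar z},\partial_z)\partial_z,\eta\rangle$ for any parallel normal section $\eta$; and in $\mathbb{C}\mathrm{M}^2(c)$ we have the explicit curvature tensor~\eqref{eq:Riemann-tensor-complex-space-forms} and the algebraic properties of $J$. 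Since $H$ and $\widetilde H$ are parallel, I would carry a generic parallel normal field $\eta$ through the bulk of the computation and only specialize at the end.

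First I would evaluate the ``curvature part'' $\langle\sigma(\partial_z,\partial_z),\eta\rangle_{\bar z}$. Substituting $X=\partial_{\bar z}$, $Y=Z=\partial_z$ into~\eqref{eq:Riemann-tensor-complex-space-forms} and pairing with $\eta$, every term dies that involves $\langle\partial_z,\partial_z\rangle=0$ (Lemma~\ref{lm:properties-conformal-parameter-and-second-fundamental-form}(i)), $\langle\partial_z,\eta\rangle=0$ ($\eta$ normal), or $\langle J\partial_z,\partial_z\rangle=0$ ($J$ skew), and what survives is a single term proportional to $c\,\langle J\partial_{\bar z},\partial_z\rangle\,\langle J\phi_z,\eta\rangle$; by Lemma~\ref{lm:properties-conformal-parameter-and-second-fundamental-form}(iv) this \emph{is} $\langle\sigma(\partial_z,\partial_z),\eta\rangle_{\bar z}$. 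Next I would compute $\bigl(\langle J\phi_z,\eta\rangle\bigr)_{\bar z}$ directly: from $\partial_{\bar z}\langle J\phi_z,\eta\rangle=\langle\overline{\nabla}_{\partial_{\bar z}}(J\phi_z),\eta\rangle+\langle J\phi_z,\overline{\nabla}_{\partial_{\bar z}}\eta\rangle$, the first term equals $\tfrac12 e^{2u}\langle JH,\eta\rangle$ because $J$ is parallel and $\overline{\nabla}_{\partial_{\bar z}}\phi_z=\nabla_{\partial_{\bar z}}\partial_z+\sigma(\partial_{\bar z},\partial_z)=\tfrac12 e^{2u}H$ by Lemma~\ref{lm:properties-conformal-parameter-and-second-fundamental-form}(ii)--(iii); for the second term, $\overline{\nabla}_{\partial_{\bar z}}\eta=-A_\eta\partial_{\bar z}$ is tangent (Weingarten and $\nabla^\perp\eta=0$), and the tangential part of $J\phi_z$ is a scalar multiple of $\partial_z$ (since $\langle J\phi_z,\phi_z\rangle=0$, $J$ being skew), so this term reduces to a multiple of $\langle\sigma(\partial_z,\partial_{\bar z}),\eta\rangle=\tfrac12 e^{2u}\langle H,\eta\rangle$, again by Lemma~\ref{lm:properties-conformal-parameter-and-second-fundamental-form}(iii). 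The outcome has the shape $\bigl(\langle J\phi_z,\eta\rangle\bigr)_{\bar z}=\tfrac12 e^{2u}\langle JH,\eta\rangle+\langle J\partial_{\bar z},\partial_z\rangle\langle H,\eta\rangle$.

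Now I assemble $\Theta$: differentiating its coefficient, using that $|H|$ is constant and that $\langle JH,H\rangle=0$ ($J$ skew), the contribution of the first summand and the contribution $2\langle J\phi_z,H\rangle\bigl(\langle J\phi_z,H\rangle\bigr)_{\bar z}$ of the second are \emph{both} proportional to $c\,\langle J\partial_{\bar z},\partial_z\rangle\,\langle J\phi_z,H\rangle$, and the weights $8|H|^2$ and $3c$ are chosen precisely so that they cancel. For $\widetilde\Theta$ one runs the same argument with $\eta=\widetilde H$; here $\langle H,\widetilde H\rangle=0$ suppresses one of the two surviving pieces, but the other involves $\langle JH,\widetilde H\rangle$, which need not vanish. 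The cancellation then rests on an algebraic identity expressing $\langle JH,\widetilde H\rangle$ through the ``K\"ahler-angle'' quantity $\langle J\partial_{\bar z},\partial_z\rangle$: since $\{e_1,e_2,\widetilde H/|H|,H/|H|\}$ is a positively oriented orthonormal frame and $J$ is an orthogonal, skew endomorphism with $J^2=-\Id$ compatible with the orientation of $M$, the scalars $\langle Je_1,e_2\rangle$ and $\langle J\widetilde H,H\rangle/|H|^2$ are independent of the chosen oriented orthonormal frames and coincide up to an explicit sign determined by the orientation; inserting this identity makes $\widetilde\Theta$ holomorphic too.

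I expect this last point to be the genuinely delicate step — pinning down the identity between $\langle JH,\widetilde H\rangle$ and $\langle J\phi_z,\phi_{\bar z}\rangle$ coming from the compatibility of $J$ with the orientation, and then reconciling every sign and numerical factor with the conventions built into~\eqref{eq:Riemann-tensor-complex-space-forms} and Lemma~\ref{lm:properties-conformal-parameter-and-second-fundamental-form}(iv). Everything else is the routine, if somewhat lengthy, calculation sketched above. An alternative that organizes the signs more transparently is to combine $\Theta$ and $\widetilde\Theta$ into a single object built from the $\pm i$-eigenvectors of the $\pi/2$-rotation of the normal bundle (the complexified normal frame in which $H$ and $\widetilde H$ are mixed), which streamlines the bookkeeping at the cost of working over $\mathbb{C}$ in the normal directions.
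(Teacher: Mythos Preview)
Your proposal is correct and follows essentially the same route as the paper: the paper also invokes Lemma~\ref{lm:properties-conformal-parameter-and-second-fundamental-form}(iv) together with the curvature tensor~\eqref{eq:Riemann-tensor-complex-space-forms}, and singles out precisely the two auxiliary identities you anticipate as the crux, namely $(J\phi_z)^\top = 2e^{-2u}\langle J\phi_z,\phi_{\bar z}\rangle\,\phi_z$ and $\langle JH,\widetilde H\rangle = 2i|H|^2 e^{-2u}\langle J\phi_z,\phi_{\bar z}\rangle$, the latter being obtained from the frame-independent equality $\langle Je_1,e_2\rangle=\langle Je_3,e_4\rangle$ for any oriented orthonormal basis $\{e_1,e_2,e_3,e_4\}$. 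Your identification of this K\"ahler-angle identity as the delicate step matches the paper exactly, and the rest is, as you say, routine bookkeeping.
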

On the one hand, if $c=0$, then these differentials reduce to the corresponding diffentials in $\mathbb{C}^n\equiv\mathbb{R}^{2n}$. On the other hand, the appearance of the new extra term $\langle{J\Phi_z},{H}\rangle$ can be motivated by the fact that the Codazzi equation in $\mathbb{C}\mathrm{M}^2(c)$ is not as simple as in the case of $\mathbb M^4(c)$.

\begin{proof}
The holomorphicity follows easily from Lemma~\ref{lm:properties-conformal-parameter-and-second-fundamental-form}, from the expression of the Riemann tensor~\eqref{eq:Riemann-tensor-complex-space-forms} and from the following equalities:
\[
\langle{JH},{\widetilde{H}}\rangle = 2i|H|^2 e^{-2u}\langle{J\Phi_z},{\Phi_{\bar{z}}}\rangle, \quad \quad
(J\Phi_z)^\top = 2e^{-2u}\Phi_z.
\]
Let us justify the first one, by showing that if $\{e_1, e_2, e_3, e_4\}$ is an oriented orthonormal basis, then $\langle{Je_1},{e_2}\rangle = \langle{Je_3},{e_4}\rangle$. Let $C = \langle{Je_1},{e_2}\rangle$, which satisfies $C^2\leq 1$ by Cauchy-Schwarz inequality. If $C^2=1$, then $Je_1=\pm e_2$, so $Je_3=\pm e_4$ and we are done. If $C^2<1$, let us define $\widetilde{e}_3 = (1-C^2)^{-1/2}(Ce_1 + Je_2)$ and $\widetilde{e}_4 = (1-C^2)^{-1/2}(Je_1 - Ce_2)$. Then $\{\widetilde{e}_3, \widetilde{e}_4\}$ is an oriented orthonormal basis spanning the same plane as $\{e_3, e_4\}$, so they differ in a rotation of angle $\theta$, and it is easy to check that $\langle{Je_3},{e_4}\rangle = \langle{J\widetilde{e}_3},{\widetilde{e}_4}\rangle = C$.
\end{proof}

Although there exist two holomorphic quadratic differentials, there is no direct proof of the classification of the \textsc{pmc} spheres in $\mathbb{C}\mathrm{M}^2(c)$. All the known proofs use the structure equations for \textsc{pmc} surfaces in $\mathbb{C}\mathrm{M}^2(c)$ provided by Ogata~\cite{Ogata95}. The proof given by Fetcu in~\cite[Corollary~3.2]{Fetcu2010} uses the two holomorphic differentials to show that such a sphere must have constant Gauss curvature, so the result follows from~\cite[Theorem~1.1]{Hirakawa2006}.

\begin{theorem}[{\cite[Corollary 1.2]{Hirakawa2006} and also \cite[Corollary~3.2]{Fetcu2010}}]
\label{thm:pmc-spheres-complex-space-forms}
Let $\phi: S \to \mathbb{C}\mathrm{M}^2(c)$ a \textsc{pmc} immersion of a sphere $S$. Then $c = 0$ and $S$ is a round sphere in a hyperplane of $\mathbb{C}^2$.
\end{theorem}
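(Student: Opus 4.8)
The plan is to follow the route of Fetcu~\cite{Fetcu2010}: use the two holomorphic quadratic differentials of~\eqref{eq:Hopf-diferentials-complex-space-forms} to force a \pmc sphere in $\CM^2(c)$ to have constant Gauss curvature, and then invoke Hirakawa's classification of such surfaces. Since $S$ has genus zero it admits no nonzero holomorphic quadratic differential, so both $\Theta$ and $\widetilde{\Theta}$ vanish identically. Writing $a = \langle \sigma(\partial_z,\partial_z), H\rangle$ and $\widetilde{a} = \langle \sigma(\partial_z,\partial_z),\widetilde{H}\rangle$, this yields the two pointwise identities $8|H|^2 a = -3c\langle J\phi_z, H\rangle^2$ and $8i|H|^2 \widetilde{a} = -3c\langle J\phi_z,\widetilde{H}\rangle^2$ (recall $|H|\neq 0$ by Lemma~\ref{lm:first-properties-pmc}). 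Thus the Hopf-type coefficients of $\sigma$, together with the trace identity $2\sigma(\partial_{\bar{z}},\partial_z)=e^{2u}H$ from Lemma~\ref{lm:properties-conformal-parameter-and-second-fundamental-form}, are controlled by $c$, $|H|$ and the Kähler angle function of $\phi$ alone. In the Euclidean case $c = 0$ one reads off $a=\widetilde{a}=0$ directly, so $\phi$ is pseudo-umbilical and the conclusion follows exactly as in the proof of Theorem~\ref{thm:pmc-spheres-space-forms}; all the work is in the curvature term that survives when $c\neq 0$.

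The central step is to feed these two identities into the structure equations for \pmc surfaces in $\CM^2(c)$ due to Ogata~\cite{Ogata95} (with the correction referred to in the Introduction) and deduce that the Kähler angle --- and hence, via the Gauss equation obtained from~\eqref{eq:Riemann-tensor-complex-space-forms}, the Gauss curvature $K$ of $S$ --- is constant. Concretely, one would write the Gauss and Codazzi equations in the global orthonormal normal frame $\{\widetilde{H}/|H|, H/|H|\}$ and a conformal coordinate, producing a closed PDE system for the conformal factor and the Kähler angle; substituting the algebraic relations collapses this system enough that integrating over the compact surface $S$, or a maximum-principle argument, pins the Kähler angle to a constant value (one of those for which $\phi$ is totally real or holomorphic), after which $K$ is constant as well. \emph{This is the main obstacle}: it is precisely the point at which Ogata's original equation was in error, so the delicate part is setting up the correct structure equations, and everything else is routine bookkeeping. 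The computation is carried out in~\cite[Corollary~3.2]{Fetcu2010}.

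With $K$ constant, I would quote Hirakawa's classification~\cite[Theorem~1.1]{Hirakawa2006} of \pmc surfaces of constant Gauss curvature in $\CM^2(c)$. Since $S$ is compact and simply connected it cannot be flat, so $K>0$ by Gauss--Bonnet; running through the resulting list, a compact example of positive constant curvature occurs only for $c = 0$, and then $\phi(S)$ is a round $2$-sphere of radius $1/|H|$ contained in an affine hyperplane $\R^3$ of $\C^2 \equiv \R^4$. Equivalently, one has returned to the Euclidean picture of Theorem~\ref{thm:pmc-spheres-space-forms}: $\phi$ is pseudo-umbilical, $\phi(S)$ lies in the totally umbilical round hypersphere $\s^3(1/|H|)\subset\C^2$, and $S$ is a minimal --- indeed a great, hence totally geodesic --- surface of that $\s^3$ (cf.\ Remark~\ref{rmk:cmc-totally-umbilical-proof}). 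This gives $c = 0$ and the asserted description.
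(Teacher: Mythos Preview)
Your outline matches exactly the route the paper itself describes in the paragraph preceding the theorem: the paper gives no self-contained proof here, but explains that Fetcu's argument uses the vanishing of both holomorphic differentials~\eqref{eq:Hopf-diferentials-complex-space-forms} together with Ogata's (corrected) structure equations to force constant Gauss curvature, and then appeals to Hirakawa's classification~\cite[Theorem~1.1]{Hirakawa2006}. One small inaccuracy: your parenthetical that the K\"ahler angle is pinned to a value making $\phi$ ``totally real or holomorphic'' overshoots what is actually needed or proved---Fetcu's computation yields constant Gauss curvature directly, and Hirakawa's list includes slant (non-Lagrangian, non-complex) examples, so the exclusion of $c\neq 0$ comes from compactness and $K>0$, not from the K\"ahler angle hitting an extreme value.
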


This non-existence result of \textsc{pmc} spheres in $\mathbb{C}\mathrm{H}^2$ and $\mathbb{C}\mathrm{P}^2$ contrasts with the rest of the symmetric spaces, where there do exist \textsc{pmc} spheres (cf.\ Theorem~\ref{thm:pmc-spheres-space-forms} and Theorem~\ref{thm:pmc-spheres-Riemannian-products}). In other Thurston four-geometries like $\mathbb{M}^3(c)\times\mathbb{R}$, $\mathbb M^2(c_1)\times\mathbb M^2(c_2)$, $\mathbb{E}(\kappa,\tau)\times\mathbb{R}$ or $\mathrm{Sol}_3\times\mathbb{R}$, there always exist \textsc{pmc} spheres, since $\mathbb{H}^3$ and the $\mathbb{E}(\kappa,\tau)$-spaces or $\mathrm{Sol}_3$ do admit \textsc{cmc} spheres (see the comments below Proposition~\ref{prop:cmc-totally-umbilical}).

\subsection[Riemannian products]{The Riemannian products $\mathbb{S}^2\times \mathbb{S}^2$ and $\mathbb{H}^2\times \mathbb{H}^2$}
\label{subsec:holomorphic-differentials-Riemannian-products}

Now let $M=\mathbb M^2(\epsilon) \times\mathbb M^2(\epsilon)$, where $\mathbb M^2(\epsilon)$ stands for the $2$-sphere $\mathbb{S}^2$ ($\epsilon = 1$) or the hyperbolic plane $\mathbb{H}^2$ ($\epsilon = -1$). Since both $\mathbb{S}^2$ and $\mathbb{H}^2$ admit a complex structure $J$, we can define on $M$ two different (but equivalent) complex structures $J_1 = (J, J)$ and $J_2 = (J, -J)$ (see~\cite[Section~3]{TU12}). Moreover, we can define a \emph{product structure} $P: T M \rightarrow TM$ as $P(u,v) = (u,-v)$, which enjoys the following properties:
\begin{enumerate}
  \item $P$ is a self-adjoint linear involutive isometry of every tangent plane of $M$.
  \item $J_2 = PJ_1 = J_1P$
  \item $P$ is parallel, i.e., $\overline{\nabla}_X PY = P\overline{\nabla}_X Y$ for all $X,Y\in\mathfrak{X}(M)$.
\end{enumerate}
The operator $P$ allows us to write the Riemann tensor of $\mathbb M^2(\epsilon)\times \mathbb M^2(\epsilon)$ as
\begin{equation}\label{eq:Riemann-tensor-Riemannian-products}
\overline{R}(X,Y)Z = \frac{\epsilon}{2} \bigl[ \langle{Y},{Z}\rangle X - \langle{X},{Z}\rangle Y + \langle{Y},{PZ}\rangle PX - \langle{X},{PZ}\rangle PY \bigr].
\end{equation}
In particular, $\mathbb M^2(\epsilon)\times\mathbb M^2(\epsilon)$ is an Einstein manifold of constant scalar curvature $4 \epsilon$ (this is no longer true in the general case $\mathbb{M}^2(c_1)\times\mathbb{M}^2(c_2)$). The existence of two holomorphic differential was shown in~\cite{TU12}.

\begin{proposition}[{\cite[Proposition~3]{TU12}}]\label{prop:differentials-M2xM2}
Let $\phi: \Sigma\to\mathbb M^2(\epsilon)\times\mathbb M^2(\epsilon)$ be a \textsc{pmc} immersion of an oriented suface $\Sigma$, and let $z = x+iy$ be a conformal parameter on $\Sigma$. Then
\begin{equation}\label{eq:holomorphic-differentials-Riemannian-products}
  \begin{split}
    \Theta_1(z) &= \left(2\langle{\sigma(\partial_z, \partial_z)},{H + i \widetilde{H}}\rangle + \frac{\epsilon}{4|H|^2}\langle{J_1 \phi_z},{H + i\widetilde{H}}\rangle^2 \right) \mathrm{d} z \otimes \mathrm{d} z, \\
    \Theta_2(z) &=  \left(2\langle{\sigma(\partial_z, \partial_z)},{H - i \widetilde{H}}\rangle + \frac{\epsilon}{4|H|^2}\langle{J_2 \phi_z},{H - i\widetilde{H}}\rangle^2 \right) \mathrm{d} z \otimes \mathrm{d} z, \\
  \end{split}
\end{equation}
are two holomorphic quadratic differentials.
\end{proposition}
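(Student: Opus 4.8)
The strategy is the same as for the complex space forms $\CM^2(c)$: fix the conformal parameter $z$, write $\Theta_j=\theta_j\,\mathrm{d}z\otimes\mathrm{d}z$, and check that $\partial_{\bar z}\theta_j=0$, controlling the part quadratic in $\sigma$ by Lemma~\ref{lm:properties-conformal-parameter-and-second-fundamental-form}(iv) and letting the $\bar z$-derivative of the correction term built from the complex structure absorb the ambient curvature. Set $\eta_\pm=H\pm i\widetilde H$. By Lemma~\ref{lm:first-properties-pmc}, both $H$ and $\widetilde H$ are parallel in the normal bundle, so $\eta_\pm$ are parallel (complex) normal sections, and
\[
\theta_1=2\langle\sigma(\partial_z,\partial_z),\eta_+\rangle+\tfrac{\epsilon}{4|H|^2}\langle J_1\phi_z,\eta_+\rangle^2,\qquad \theta_2=2\langle\sigma(\partial_z,\partial_z),\eta_-\rangle+\tfrac{\epsilon}{4|H|^2}\langle J_2\phi_z,\eta_-\rangle^2.
\]

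I would first dispatch $\theta_1$. Since $\eta_+$ is parallel, Lemma~\ref{lm:properties-conformal-parameter-and-second-fundamental-form}(iv) gives $\partial_{\bar z}\langle\sigma(\partial_z,\partial_z),\eta_+\rangle=\langle\overline{R}(\partial_{\bar z},\partial_z)\partial_z,\eta_+\rangle$; substituting the product Riemann tensor~\eqref{eq:Riemann-tensor-Riemannian-products} and using $\langle\partial_z,\partial_z\rangle=0$ (Lemma~\ref{lm:properties-conformal-parameter-and-second-fundamental-form}(i)) together with $\langle\partial_z,\eta_+\rangle=0$, every term without $P$ vanishes and only
\[
\partial_{\bar z}\langle\sigma(\partial_z,\partial_z),\eta_+\rangle=\tfrac{\epsilon}{2}\bigl(\langle\partial_z,P\partial_z\rangle\,\langle P\partial_{\bar z},\eta_+\rangle-\langle\partial_{\bar z},P\partial_z\rangle\,\langle P\partial_z,\eta_+\rangle\bigr)
\]
survives. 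For the correction term I would differentiate $\langle J_1\phi_z,\eta_+\rangle$: using that $J_1$ is parallel, that $\nabla_{\partial_{\bar z}}\partial_z=0$ and $\sigma(\partial_{\bar z},\partial_z)=\tfrac12 e^{2u}H$ (Lemma~\ref{lm:properties-conformal-parameter-and-second-fundamental-form}(ii),(iii)), and the Weingarten formula $\overline{\nabla}_{\partial_{\bar z}}\eta_+=-A_{\eta_+}\partial_{\bar z}$, one obtains $\partial_{\bar z}\langle J_1\phi_z,\eta_+\rangle=\tfrac12 e^{2u}\langle J_1H,\eta_+\rangle-\langle (J_1\phi_z)^\top,A_{\eta_+}\partial_{\bar z}\rangle$.

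It then remains to verify that $\tfrac{\epsilon}{2|H|^2}\langle J_1\phi_z,\eta_+\rangle\,\partial_{\bar z}\langle J_1\phi_z,\eta_+\rangle$ exactly cancels twice the curvature expression above; this is where the computational weight of the proof lies. The algebraic inputs are the same ones already used for $\CM^2(c)$: (a) $(J_1\phi_z)^\top=\tfrac{2}{e^{2u}}\langle J_1\phi_z,\phi_{\bar z}\rangle\,\phi_z$, immediate from $\langle J_1\phi_z,\phi_z\rangle=0$; (b) the pointwise linear-algebra identity $\langle J_1e_1,e_2\rangle=\langle J_1e_3,e_4\rangle$ for every oriented orthonormal basis $\{e_1,e_2,e_3,e_4\}$ of $T_pM$, proved verbatim as in the preceding subsection and applied to $\{\phi_x/e^u,\phi_y/e^u,\widetilde H/|H|,H/|H|\}$ — which is positively oriented by Definition~\ref{def:orientation-normal-bundle} and Lemma~\ref{lm:first-properties-pmc}(i) — so that $\langle J_1H,\eta_+\rangle=i\langle J_1H,\widetilde H\rangle$ is expressed through $\langle J_1\phi_z,\phi_{\bar z}\rangle$; and (c) the relation $P=-J_1J_2$ (equivalently $J_1=PJ_2$), a consequence of $J_2=PJ_1$, $P^2=\Id$ and $J_1^2=-\Id$, which turns the $P$-contractions in the curvature term into $J_1$- and $J_2$-quantities that one then splits into tangential and normal parts via (a), (b) and their $J_2$-analogues. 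Carrying this out yields the cancellation, hence $\partial_{\bar z}\theta_1=0$; the argument for $\theta_2$ is identical after the substitution $J_1\leftrightarrow J_2$, $\eta_+\leftrightarrow\eta_-$. The main obstacle is precisely this bookkeeping of tangential versus normal components together with the tracking of the numerical constants — conceptually routine, but the place where the specific coefficient $\tfrac{\epsilon}{4|H|^2}$ (rather than some other multiple) gets pinned down.
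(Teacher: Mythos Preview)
Your proposal is correct and follows essentially the same route as the paper, which merely says ``It also follows from Lemma~\ref{lm:properties-conformal-parameter-and-second-fundamental-form} after some manipulations, as in the previous cases''; you have spelled out what those manipulations are. One small point worth making explicit in your bookkeeping: the linear-algebra identity (b) holds for $J_1$ with the given orientation, but for $J_2$ it comes with the opposite sign (since $J_1$ and $J_2$ induce opposite orientations on $T_pM$), which is exactly why $\Theta_1$ pairs $J_1$ with $\eta_+$ while $\Theta_2$ pairs $J_2$ with $\eta_-$---cf.\ the identities $\langle J_1\phi_z,H\rangle=i\langle J_1\phi_z,\widetilde H\rangle$ and $\langle J_2\phi_z,H\rangle=-i\langle J_2\phi_z,\widetilde H\rangle$ in the Remark following the proposition.
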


\begin{proof}
It also follows from Lemma~\ref{lm:properties-conformal-parameter-and-second-fundamental-form} after some manipulations, as in the previous cases.
\end{proof}

\begin{remark}
  The differentials $\Theta_1$ and $\Theta_2$ can be chosen in different ways, since any linear combination of them is also holomorphic. As a particular case and taking into account that $\langle{J_1 \phi_z},{H}\rangle = i\langle{J_1\phi_z},{\widetilde{H}}\rangle$, $\langle{J_2\phi_z},{H}\rangle = -i\langle{J_2\phi_z},{\widetilde{H}}\rangle$, and $J_2 = PJ_1$, we can define the following two holomorphic quadratic differentials (cf.\ equation~\eqref{eq:Hopf-diferentials-complex-space-forms}):
  \begin{equation}\label{eq:holomorphic-differentials-Riemannian-products-var}
    \begin{split}
      \Theta &= \Bigl(4|H|^2\langle{\sigma(\partial_z, \partial_z)},{H}\rangle + \epsilon\bigl[\langle{J_1\phi_z},{H}\rangle^2 + \langle{J_1\phi_z},{PH}\rangle^2\bigr]\Bigr) \mathrm{d} z\otimes \mathrm{d} z \\
     \widetilde{\Theta} &= \Bigl(4i|H|^2\langle{\sigma(\partial_z, \partial_z)},{\widetilde{H}}\rangle - \epsilon\bigl[\langle{J_1\phi_z},{\widetilde{H}}\rangle^2 - \langle{J_1\phi_z},{P\widetilde{H}}\rangle^2\bigr]\Bigr) \mathrm{d} z \otimes \mathrm{d} z \\
    \end{split}
  \end{equation}
  It is easy to show that $\Theta= |H|^2(\Theta_1 + \Theta_2)$ and $\widetilde\Theta=|H|^2(\Theta_1 - \Theta_2)$, so these expressions make it clear that $\Theta_1$ and $\Theta_2$ extend the classical differentials in $\mathbb{R}^4$ given by~\eqref{eq:quadratic-differentials-space-forms}.
\end{remark}

Using that these two differentials vanish on spheres, it is shown in~\cite{TU12} that the extrinsic normal curvature of an immersed \textsc{pmc} sphere has to be zero. Then the following classification is a consequence of Theorem~\ref{thm:clasification-pmc-M2xM2}.

\begin{theorem}[{\cite[Corollary 1]{TU12}}]
\label{thm:pmc-spheres-Riemannian-products}
Let $\phi: S \to\mathbb M^2(\epsilon) \times\mathbb M^2(\epsilon)$, $\epsilon^2 = 1$, be a \textsc{pmc} immersion of a sphere $S$. Then $\phi$ is a \textsc{cmc} sphere in a totally geodesic hypersurface of $\mathbb M^2(\epsilon) \times\mathbb M^2(\epsilon)$.
\end{theorem}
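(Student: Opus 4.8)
The plan is to exploit the topology of $S$ to kill the two holomorphic quadratic differentials of Proposition~\ref{prop:differentials-M2xM2}, extract from this the vanishing of the extrinsic normal curvature of $\phi$, and then quote the general classification of \textsc{pmc} surfaces in $\mathbb M^2(\epsilon)\times\mathbb M^2(\epsilon)$ under that hypothesis, namely Theorem~\ref{thm:clasification-pmc-M2xM2}. For the first point: a topological sphere carries a unique conformal structure, that of $\CP^1$, and admits no non-zero holomorphic quadratic differential (the square of the canonical bundle has degree $-4<0$; equivalently, by Riemann--Roch the space of such differentials on a genus-$g$ surface has dimension $\max\{0,3g-3\}$, which is $0$ when $g=0$). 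Hence $\Theta_1\equiv\Theta_2\equiv 0$ on $S$, and therefore also $\Theta\equiv\widetilde\Theta\equiv 0$ in the normalization \eqref{eq:holomorphic-differentials-Riemannian-products-var}.

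The core of the argument is then to unpack $\Theta\equiv\widetilde\Theta\equiv 0$. Working in the global orthonormal normal frame $\{H/|H|,\widetilde H/|H|\}$ and recording the trace constraints $\operatorname{tr} A_H=2|H|^2$ and $\operatorname{tr} A_{\widetilde H}=0$, the vanishing of the two differentials expresses the $(2,0)$-parts $\langle\sigma(\partial_z,\partial_z),H\rangle$ and $\langle\sigma(\partial_z,\partial_z),\widetilde H\rangle$ as explicit quadratic expressions in the components of $\phi_z$ relative to $J_1$ and $P$. Substituting these, together with the curvature tensor \eqref{eq:Riemann-tensor-Riemannian-products}, into the Codazzi equation and the Ricci equation \eqref{eq:Ricci}, and using that $P$ is a parallel involutive isometry with $J_2=PJ_1$ and that $(J_1\phi_z)^\top$ is proportional to $\phi_z$, one is reduced to checking that the extrinsic normal curvature of $\phi$ --- the function measuring the failure of $A_H$ and $A_{\widetilde H}$ to commute --- vanishes identically on $S$; this is the computation carried out in \cite{TU12}.

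Once the extrinsic normal curvature is known to be identically zero, Theorem~\ref{thm:clasification-pmc-M2xM2} applies and shows that $\phi(S)$ is contained, as a \textsc{cmc} surface, in a totally geodesic hypersurface of $\mathbb M^2(\epsilon)\times\mathbb M^2(\epsilon)$ --- by item 4 of the discussion after Proposition~\ref{prop:cmc-totally-umbilical}, an open part of $\mathbb M^2(\epsilon)\times\gamma$ for some geodesic $\gamma$; since $S$ is a sphere this is a \textsc{cmc} sphere, which is the assertion. The genuine obstacle in this scheme is the middle step: turning the two scalar identities $\Theta=\widetilde\Theta=0$ into the vanishing of the commutator $[A_H,A_{\widetilde H}]$. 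No single manipulation is hard, but it demands careful simultaneous bookkeeping of the $(2,0)$- and $(1,1)$-components of $\sigma$, of the tangential/normal decomposition of $\phi_z$, and of the compatibility relations among $J_1$, $J_2=PJ_1$ and $P$ (including getting the $\epsilon$-signs right). The final application of Theorem~\ref{thm:clasification-pmc-M2xM2} is, by contrast, essentially a black box here, since it is proved separately.
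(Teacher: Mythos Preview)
Your proposal is correct and follows essentially the same route the paper sketches: the two holomorphic differentials of Proposition~\ref{prop:differentials-M2xM2} vanish on a sphere, this forces the extrinsic normal curvature to vanish (the computation being deferred to \cite{TU12}), and then Theorem~\ref{thm:clasification-pmc-M2xM2} finishes the argument. One small point you gloss over: Theorem~\ref{thm:clasification-pmc-M2xM2} has two alternatives, and you only invoke alternative (1); you should note that the explicit examples in alternative (2) (products of curves, or an embedding of $\mathbb{H}^2$, cf.\ Remark~\ref{rmk:pmc-S2xS2-H2xH2-Theta-zero}) are never topological spheres, so that case is excluded.
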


\begin{remark}\label{rmk:pmc-S2xS2-H2xH2-Theta-zero}
It is interesting to highlight that \textsc{pmc} spheres are not the only surfaces with vanishing holomorphic differentials. Indeed, the product of two hypercycles in $\mathbb{H}^2\times\mathbb{H}^2$ with curvatures satisfying $k_1^2 + k_2^2 = 1$ and a special embedding of the hyperbolic plane in $\mathbb{H}^2\times \mathbb{H}^2$ also satisfy that condition (see~\cite[Theorem~4]{TU12}).
\end{remark}

\subsection[The Riemann product of a space form and the real line]{The Riemannian products $\mathbb M^3(c) \times \mathbb{R}$} \label{subsec:holomorphic-differentials-M3xR}

The study of \textsc{pmc} surfaces in $M=\mathbb M^3(c)\times \mathbb{R}$ was tackled by de Lira and Vit\'{o}rio~\cite{LV08}, as well as by Alencar, Do Carmo and Tribuzy~\cite{AdCT2010}. As in all the previous cases these authors found a holomorphic quadratic differential. In spite of their claim that there are two holomorphic differentials $Q^h$ and $Q^v$, a deeper analysis shows that $Q^h$ and $Q^v$ coincide.

The Riemann tensor of $\mathbb M^3(c) \times \mathbb{R}$ is given by:
\begin{equation}\label{eq:Riemann-tensor-M3xR}
\begin{split}
\overline{R}(X,Y)Z &= \frac{c}{4} \bigl( \langle{Y+PY},{Z}\rangle(X+PX) - \langle{X+PX},{Z}\rangle(Y+PY)\bigr) \\
  &= c\Bigl(\langle{Y},{Z}\rangle X - \langle{X},{Z}\rangle Y - \langle{Y},{\zeta}\rangle\langle{Z},{\zeta}\rangle X + \langle{X},{\zeta}\rangle\langle{Z},{\zeta}\rangle Y + \\
  &\quad + \langle{X},{Z}\rangle\langle{Y},{\zeta}\rangle\zeta - \langle{Y},{Z}\rangle\langle{X},{\zeta}\rangle\zeta\Bigr),
\end{split}
\end{equation}
where $P$ is the product structure in $T (\mathbb M^3(c) \times \mathbb{R}) \equiv T\mathbb M^3(c) \times \mathbb{R}$ given by $P(u, t) = (u,-t)$ for all $u \in T\mathbb{M}^3(c)$, and $t \in \mathbb{R}$, and $\zeta$ is a unit tangent vector to the factor $\mathbb{R}$. The second expression in~\eqref{eq:Riemann-tensor-M3xR} follows from the identity $PX = X - 2\langle{X},{\zeta}\rangle\zeta$ for all $X\in\mathfrak{X}(M)$.

\begin{proposition}
Let $\phi:\Sigma \rightarrow\mathbb M^3(c) \times \mathbb{R}$ be a \textsc{pmc} immersion of an oriented surface $\Sigma$ and let $z = x+iy$ be a conformal parameter. Then
\begin{equation}\label{eq:holomorphic-differential-space-formxR}
\begin{split}
\Theta(z) &= \bigl( 2\langle{\sigma(\partial_z, \partial_z)},{H}\rangle - c\langle{\phi_z},{\zeta}\rangle^2\bigr) \mathrm{d} z \otimes \mathrm{d} z \\
   &= \bigl( 2\langle{\sigma(\partial_z, \partial_z)},{H}\rangle + \tfrac{c}{2}\langle{\phi_z},{P\phi_z}\rangle\bigr) \mathrm{d} z \otimes \mathrm{d} z
\end{split}
\end{equation}
is a holomorphic quadratic differential in $\Sigma$.
\end{proposition}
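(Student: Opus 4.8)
The plan is to verify directly that $\Theta$ is well defined on $\Sigma$ and that the $\bar z$-derivative of its coefficient function vanishes identically. Well-definedness will be immediate: up to the constant $c$, the coefficient is a difference of two genuine quadratic differentials, namely $\langle\sigma(\partial_z,\partial_z),H\rangle\,\mathrm d z\otimes\mathrm d z$ (globally defined since $\sigma$ is tensorial and $H$ is a fixed normal section) and the tensor square of the $(1,0)$-form $\langle\phi_z,\zeta\rangle\,\mathrm d z$, which is the $(1,0)$-part of $\mathrm d h$ for the height function $h$, i.e.\ the composition of $\phi$ with the projection onto the $\mathbb R$-factor. So the substance of the proof is holomorphicity, i.e.\ showing $\partial_{\bar z}\bigl(2\langle\sigma(\partial_z,\partial_z),H\rangle-c\langle\phi_z,\zeta\rangle^2\bigr)=0$.

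For the first summand, the key observation is that, since $\phi$ is \textsc{pmc}, the mean curvature vector $H$ is parallel in the normal bundle, so Lemma~\ref{lm:properties-conformal-parameter-and-second-fundamental-form}(iv) applies with $\eta=H$ and gives $\langle\sigma(\partial_z,\partial_z),H\rangle_{\bar z}=\langle\overline R(\partial_{\bar z},\partial_z)\partial_z,H\rangle$. I would then substitute $X=\partial_{\bar z}$, $Y=Z=\partial_z$ in the curvature formula~\eqref{eq:Riemann-tensor-M3xR}, use $\langle\partial_z,\partial_z\rangle=0$ and $\langle\partial_z,\partial_{\bar z}\rangle=\tfrac12 e^{2u}$ from Lemma~\ref{lm:properties-conformal-parameter-and-second-fundamental-form}(i), and pair with $H$. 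Since $H$ is normal it kills the tangential vectors $\partial_z$ and $\partial_{\bar z}$, so only the $\zeta$-valued term of~\eqref{eq:Riemann-tensor-M3xR} survives, leaving $\langle\sigma(\partial_z,\partial_z),H\rangle_{\bar z}=\tfrac c2 e^{2u}\langle\phi_z,\zeta\rangle\langle\zeta,H\rangle$.

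For the second summand, I would use that the unit field $\zeta$ tangent to the $\mathbb R$-factor is parallel for $\overline\nabla$; then, by Gauss's formula together with $\nabla_{\partial_{\bar z}}\partial_z=0$ and $2\sigma(\partial_{\bar z},\partial_z)=e^{2u}H$ from Lemma~\ref{lm:properties-conformal-parameter-and-second-fundamental-form}(ii)--(iii), one obtains $\partial_{\bar z}\langle\phi_z,\zeta\rangle=\langle\overline\nabla_{\partial_{\bar z}}\partial_z,\zeta\rangle=\langle\sigma(\partial_{\bar z},\partial_z),\zeta\rangle=\tfrac12 e^{2u}\langle H,\zeta\rangle$, whence $\bigl(c\langle\phi_z,\zeta\rangle^2\bigr)_{\bar z}=c\,e^{2u}\langle\phi_z,\zeta\rangle\langle H,\zeta\rangle$. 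Subtracting, the $\bar z$-derivative of the full coefficient is $c\,e^{2u}\langle\phi_z,\zeta\rangle\langle\zeta,H\rangle-c\,e^{2u}\langle\phi_z,\zeta\rangle\langle H,\zeta\rangle=0$, so $\Theta$ is holomorphic; the equality of the two alternative expressions for $\Theta$ follows from $PX=X-2\langle X,\zeta\rangle\zeta$, which gives $\langle\phi_z,P\phi_z\rangle=-2\langle\phi_z,\zeta\rangle^2$.

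The computation is essentially routine, and there is no genuine obstacle; the two facts doing the work are that $H$ is parallel (so that Lemma~\ref{lm:properties-conformal-parameter-and-second-fundamental-form}(iv) is available) and that $\zeta$ is $\overline\nabla$-parallel (which is precisely what makes the curvature term from the first summand cancel the derivative coming from the second). The only place where attention is needed is the bookkeeping in expanding~\eqref{eq:Riemann-tensor-M3xR}: one must check that after pairing with the normal vector $H$ every term proportional to $\partial_z$ or $\partial_{\bar z}$ drops out, so that a single $\zeta$-term remains — it is this term, carrying the factor $\langle\zeta,H\rangle$, that produces the exact cancellation.
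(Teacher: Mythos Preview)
Your proof is correct and follows essentially the same route as the paper's: apply Lemma~\ref{lm:properties-conformal-parameter-and-second-fundamental-form}(iv) with $\eta=H$, evaluate $\langle\overline R(\partial_{\bar z},\partial_z)\partial_z,H\rangle$ via the second form of~\eqref{eq:Riemann-tensor-M3xR} to obtain $\tfrac{c}{2}e^{2u}\langle\phi_z,\zeta\rangle\langle H,\zeta\rangle$, then differentiate $\langle\phi_z,\zeta\rangle^2$ using that $\zeta$ is $\overline\nabla$-parallel and observe the cancellation; the equality of the two expressions for $\Theta$ is deduced in both cases from $P\phi_z=\phi_z-2\langle\phi_z,\zeta\rangle\zeta$ together with $\langle\phi_z,\phi_z\rangle=0$. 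Your write-up adds an explicit remark on global well-definedness, but otherwise the arguments coincide.
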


\begin{proof}
  Both expressions for $\Theta$ coincide, which follows from the equality $P\phi_z = \phi_z - 2\langle{\phi_z},{\zeta}\rangle\zeta$ and the fact that $z$ is a conformal parameter, i.e., $\langle{\phi_z},{\phi_z}\rangle = 0$. Using now Lemma~\ref{lm:properties-conformal-parameter-and-second-fundamental-form} and the second equality in~\eqref{eq:Riemann-tensor-M3xR}, we deduce that $\langle{\sigma(\partial_z,\partial_z)},{H}\rangle_{{\bar{z}}} = \tfrac{c}{2} e^{2u}\langle{\phi_z},{\zeta}\rangle\langle{H},{\zeta}\rangle$, and also
  \[
    (\langle{\phi_z},{\zeta}\rangle^2)_{\bar{z}} = 2\langle{\phi_z},{\zeta}\rangle\langle{\overline{\nabla}_{\partial_{\bar{z}}} \phi_z},{\zeta}\rangle =  e^{2u}\langle{\phi_z},{\zeta}\rangle\langle{H},{\zeta}\rangle,
  \]
  where we have taken into account that $\zeta$ is a parallel vector field. Consequently, the differential is holomorphic.
\end{proof}

De Lira and Vit\'{o}rio use this quadratic differential $\Theta$ to classify the \textsc{pmc} spheres in $\mathbb M^3(c)\times \mathbb{R}$ by showing that there is a principal frame $\{e_1,e_2\}$ on the surface such that the the associated curvature lines to $e_1$ lie in horizontal slices. Then an analysis of these curvature lines leads to the following result:

\begin{theorem}[{\cite[Theorem~3.2]{LV08}}]
\label{thm:pmc-spheres-space-formxR}
The only \textsc{pmc} spheres immersed in $\mathbb M^3(c) \times \mathbb{R}$ are the rotationally invariant \textsc{cmc} surfaces embedded in totally geodesic cylinders $\mathbb M^2(c) \times \mathbb{R}$ or in totally geodesic slices $\mathbb M^3(c)\times \{t_0\}$, $t_0 \in \mathbb{R}$.
\end{theorem}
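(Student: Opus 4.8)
The plan is to use the holomorphic differential $\Theta$ of~\eqref{eq:holomorphic-differential-space-formxR} exactly as Hopf's differential is used for \textsc{cmc} spheres in space forms. Since $\mathbb S^2$ carries no nonzero holomorphic quadratic differential (the bundle $K_{\mathbb S^2}^{\otimes 2}$ has negative degree, so by Riemann--Roch it has no nonzero holomorphic sections), $\Theta$ must vanish identically on $S$. Unwinding $\Theta\equiv 0$ gives the pointwise identity $2\langle{\sigma(\partial_z,\partial_z)},{H}\rangle=c\langle{\phi_z},{\zeta}\rangle^2$. Introducing the height function $h=\langle{\phi},{\zeta}\rangle$, so that $\nabla h=\zeta^\top$ and $\langle{\phi_z},{\zeta}\rangle=h_z$, and writing $\langle{\sigma(\partial_z,\partial_z)},{H}\rangle=\langle{A_H\partial_z},{\partial_z}\rangle$, this identity (together with Lemma~\ref{lm:properties-conformal-parameter-and-second-fundamental-form} and $\operatorname{tr}A_H=2|H|^2$) is equivalent to
\begin{equation*}
A_H = \bigl(|H|^2-\tfrac{c}{4}|\nabla h|^2\bigr)\,\mathrm{Id} + \tfrac{c}{2}\,\mathrm{d}h\otimes\mathrm{d}h ,
\end{equation*}
where $\mathrm{d}h\otimes\mathrm{d}h$ denotes the self-adjoint operator $X\mapsto\langle{X},{\nabla h}\rangle\nabla h$. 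For $c=0$ one has $\mathbb M^3(0)\times\mathbb R=\mathbb R^4$ and the statement follows from Theorem~\ref{thm:pmc-spheres-space-forms}, so assume $c\neq 0$. Then, on the open set $\Sigma_0\subseteq S$ where $\nabla h\neq 0$, the field $e_2=\nabla h/|\nabla h|$ is a principal direction of $A_H$ with principal curvature strictly different from the one in the orthogonal direction $e_1$; thus $\{e_1,e_2\}$ is a well-defined positively oriented principal frame of $A_H$ on $\Sigma_0$, and since $\langle{e_1},{\zeta}\rangle=\mathrm{d}h(e_1)=0$, every integral curve of $e_1$ lies in a totally geodesic slice $\mathbb M^3(c)\times\{t_0\}$.

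I would then distinguish two cases. If $h$ is constant, $\phi(S)$ lies in a totally geodesic slice $\mathbb M^3(c)\times\{t_0\}$, so $H$ equals the mean curvature vector of $\phi$ as a surface of $\mathbb M^3(c)$ (cf.\ Remark~\ref{rmk:cmc-totally-umbilical-proof} with $\widehat H=0$); as a \textsc{cmc} sphere in $\mathbb M^3(c)$ it is a totally umbilical round sphere by Hopf's theorem, hence a rotationally invariant, embedded \textsc{cmc} sphere in that slice. If $h$ is non-constant, the structure equations force its critical points to be isolated (one shows $\nabla h$ cannot vanish along a curve unless $h$ is constant), so $\Sigma_0$ is $S$ minus finitely many points --- open, dense and connected --- and carries the global principal frame $\{e_1,e_2\}$.

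On $\Sigma_0$ one now feeds this frame into the Codazzi equation for $\phi$ in $\mathbb M^3(c)\times\mathbb R$, together with the normal frame $\{\widetilde H/|H|,\,H/|H|\}$ of Lemma~\ref{lm:first-properties-pmc} and the fact that $\zeta$ is parallel in $\mathbb M^3(c)\times\mathbb R$, in order to obtain differential equations for $|\nabla h|$ and for the connection coefficients along the two foliations by curvature lines. The goal is to show that along each integral curve of $e_1$ the geodesic curvature of the curve inside its slice $\mathbb M^3(c)\times\{t_0\}$ is constant, so these curves are round circles, while the orthogonal trajectories, the integral curves of $e_2$, are geodesics of $\Sigma$ whose traces are planar curves. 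Reconstructing $\phi$ from one such profile geodesic and the one-parameter family of circles then displays $\phi(S)$ as a surface of revolution inside a totally geodesic $\mathbb M^2(c)\times\mathbb R$, which is \textsc{cmc} there by Remark~\ref{rmk:cmc-totally-umbilical-proof}; requiring it to close up smoothly at the (isolated) critical points of $h$, which become the poles, selects exactly the sphere-type rotational profiles.

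The main obstacle is this last step: upgrading the infinitesimal information ``$A_H$ is diagonalized by $\nabla h$ and its orthogonal direction, whose integral curves lie in slices'' to the global conclusion that $\phi(S)$ is a genuine surface of revolution. It requires a careful integration of the Codazzi (and Gauss) equations in the frame $\{e_1,e_2\}$ that controls \emph{both} families of curvature lines simultaneously, and a delicate local analysis around the isolated critical points of $h$, where the principal frame degenerates; this local behaviour is what identifies those points as the poles of the surface of revolution, forces the rotation axis to be a geodesic of $\mathbb M^2(c)\times\mathbb R$, and guarantees that the surface is embedded rather than merely immersed.
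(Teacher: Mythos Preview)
Your approach is essentially the one the paper attributes to de Lira and Vit\'orio: the paper's own ``proof'' here is only the one-sentence sketch preceding the statement, namely that the vanishing of $\Theta$ on the sphere produces a principal frame $\{e_1,e_2\}$ for $A_H$ whose $e_1$-curvature lines lie in horizontal slices, after which an analysis of these curvature lines yields the rotational description. Your derivation of the identity $A_H=(|H|^2-\tfrac{c}{4}|\nabla h|^2)\,\mathrm{Id}+\tfrac{c}{2}\,\mathrm{d}h\otimes\mathrm{d}h$ makes this frame explicit and matches the paper's outline exactly; the final integration step you flag as the ``main obstacle'' is precisely the part the paper does not reproduce either, deferring instead to~\cite{LV08}.
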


A result of the same kind is obtained by Alencar, do Carmo and Tribuzy in $\mathbb{M}^4(c) \times \mathbb R$ (codimension 3), as we show next. One expects that a \textsc{pmc} sphere in $\mathbb{M}^4(c) \times \mathbb R$ lies either in a slice $\mathbb M^4(c) \times\{t_0\}$ or in some $\mathbb M^2(c) \times \mathbb{R}$ as a \textsc{cmc} sphere (hence rotationally invariant). Unfortunately, a further reduction of the codimension still remains an open problem, which would give the complete classification of \textsc{pmc} spheres in $\mathbb M^n(c) \times \mathbb{R}$ for all $n\geq 4$ (see Theorem~\ref{thm:classification-pmc-surfaces-space-formxR}).

\begin{theorem}[{\cite[Theorem 2]{AdCT2010}}]
\label{thm:pmc-spheres-M4xR}
Let $\phi: S \rightarrow\mathbb M^4(c) \times \mathbb{R}$ be a \textsc{pmc} immersion of a sphere $S$. Then one of the following assertions holds:
\begin{enumerate}
  \item[(i)] $\phi(S)$ is contained in a totally umbilical hypersurface of $\mathbb M^4(c) \times \{t_0\}$ as a \textsc{cmc} surface.

  \item[(ii)] Considering $\mathbb M^4(c)\times\mathbb{R}$ isometrically embedded in $\mathbb{R}^6$ ($c=1$) or $\mathbb{R}^6_1$ ($c=-1$), there is a plane $\Pi$ such that $\phi(S)$ is invariant under rotations which fix $\Pi^\bot$, and the level curves of the height function $p \mapsto \langle{\phi(p)},{\zeta}\rangle$ are circles lying in planes parallel to $\Pi$.
\end{enumerate}
\end{theorem}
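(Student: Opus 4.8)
The plan is to run the argument behind Theorem~\ref{thm:pmc-spheres-space-formxR} in codimension three, combining the vanishing of a holomorphic quadratic differential on the sphere with a reduction of codimension. First I would observe that the quadratic differential $\Theta=\bigl(2\langle\sigma(\partial_z,\partial_z),H\rangle-c\langle\phi_z,\zeta\rangle^2\bigr)\,\mathrm{d}z\otimes\mathrm{d}z$, defined exactly as in~\eqref{eq:holomorphic-differential-space-formxR} but now for $\phi:S\to\mathbb M^4(c)\times\mathbb R$, is still holomorphic: the computation carried out right after~\eqref{eq:holomorphic-differential-space-formxR} uses only the explicit form of $\overline{R}$ in a product $\mathbb M^n(c)\times\mathbb R$ --- which is formally the same for every $n$ --- together with the parallelism of the unit vertical field $\zeta$, so it goes through unchanged via Lemma~\ref{lm:properties-conformal-parameter-and-second-fundamental-form}. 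Since $S$ is a sphere, every holomorphic quadratic differential on it vanishes, whence $2\langle\sigma(\partial_z,\partial_z),H\rangle=c\langle\phi_z,\zeta\rangle^2$ identically.

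Next I would decode this identity geometrically. Write $\zeta=T+N$ with $T=\zeta^\top\in\mathfrak{X}(S)$ and $N=\zeta^\perp\in\mathfrak{X}^\perp(S)$, and let $h=\langle\phi,\zeta\rangle$ be the height function, so $\nabla h=T$. Using $\langle\phi_z,\zeta\rangle=\langle\phi_z,T\rangle$ and part~(i) of Lemma~\ref{lm:properties-conformal-parameter-and-second-fundamental-form}, the condition $\Theta\equiv0$ becomes $\langle A_H\partial_z,\partial_z\rangle=\tfrac c2\langle\partial_z,T\rangle^2$; since a self-adjoint endomorphism of a surface is determined by its trace and its $(z,z)$-component, and $\mathrm{tr}\,A_H=2\abs H^2$, this is equivalent to $A_HX=\bigl(\abs H^2-\tfrac c4\abs T^2\bigr)X+\tfrac c2\langle X,T\rangle T$ for all $X$. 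In particular, wherever $T\neq0$ the field $T$ is principal for $A_H$ with eigenvalue $\abs H^2+\tfrac c4\abs T^2$ and any $X\perp T$ is principal with eigenvalue $\abs H^2-\tfrac c4\abs T^2$. Moreover, differentiating $\langle H,\zeta\rangle$ and using $\overline{\nabla}\zeta=0$ and $\nabla^\perp H=0$ (so that $\overline{\nabla}_XH=-A_HX$) one gets $\mathrm{d}\langle H,\zeta\rangle=-\bigl(\abs H^2+\tfrac c4\abs{\nabla h}^2\bigr)\mathrm{d}h$, so $\langle H,\zeta\rangle$ is constant along the level sets of $h$.

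I would then split into two cases. If $T\equiv0$, then $h$ is constant and $\phi(S)$ lies in a slice $\mathbb M^4(c)\times\{t_0\}$; since the slice is totally geodesic, $\phi$ is there a \pmc (or minimal) surface of codimension two, and Ferus's Theorem~\ref{thm:pmc-spheres-space-forms} places $\phi(S)$ inside a totally umbilical hypersurface of the slice as a minimal surface, which is case~(i). If $T\not\equiv0$, then on the open set where $T\neq0$ I would take the positively oriented orthonormal frame $\{e_1,e_2\}$ with $e_1=T/\abs T$; by the previous step this frame is principal for $A_H$, and $e_2(h)=\langle e_2,T\rangle=0$, so the integral curves of $e_2$ are lines of curvature contained in horizontal slices $\mathbb M^4(c)\times\{t_0\}$. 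Combining the formula for $A_H$, the constancy of $\abs H$ and of $\langle H,\zeta\rangle$ along level sets, and the ambient Gauss, Codazzi and Ricci equations with the integrability conditions of $\{e_1,e_2\}$, one obtains first a reduction of codimension in the spirit of Theorem~\ref{thm:classification-pmc-surfaces-space-formxR}, and then that each of these curves has constant geodesic curvature inside its slice and is contained in a $2$-plane of the flat space $\mathbb R^6$ ($c=1$) or $\mathbb R^6_1$ ($c=-1$) in which $\mathbb M^4(c)\times\mathbb R$ is isometrically embedded; hence it is a round circle. Finally, compactness of $S$ forces the line of curvature tangent to $e_1$ to close up and to meet all the level circles orthogonally, which stacks these circles coaxially in parallel $2$-planes and exhibits $\phi(S)$ as invariant under the rotations fixing the orthogonal complement $\Pi^\perp$ of their common plane $\Pi$: this is case~(ii).

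The hard part is this last step --- passing from the first-order data of the middle step to the rigid global picture of a rotation surface with circular profiles in parallel planes. It requires a reduction-of-codimension argument valid in the non-space-form ambient $\mathbb M^4(c)\times\mathbb R$, and a careful integration of the structure equations both to rule out helical profile curves and to see that the level circles are coaxial; it is precisely the compactness of $S$ --- the periodicity of the lines of curvature and the behaviour of $T$ near its zeros --- that makes the rigidity work.
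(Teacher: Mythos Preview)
The paper does not prove Theorem~\ref{thm:pmc-spheres-M4xR}. This is a survey, and the result is simply quoted from Alencar--do Carmo--Tribuzy~\cite{AdCT2010}; the only commentary is the sentence preceding the statement (``A result of the same kind is obtained\ldots'') and Remark~\ref{rmk:improvement-spheres-MxR} afterwards. So there is no proof in the paper to compare your proposal against.

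That said, your outline is entirely in line with the strategy the survey \emph{describes} for the companion codimension-two result: the text before Theorem~\ref{thm:pmc-spheres-space-formxR} says that de~Lira and Vit\'orio use the vanishing of $\Theta$ to produce a principal frame $\{e_1,e_2\}$ whose $e_1$-curvature lines lie in horizontal slices, and Section~\ref{subsec:general-M3xR} records that the same differential is holomorphic for all $n\geq 2$ and that Theorem~\ref{thm:classification-pmc-surfaces-space-formxR} supplies the reduction of codimension. Your sketch reproduces exactly this architecture and correctly identifies the delicate part as the integration of the structure equations leading to the coaxial-circle description in~(ii).

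Two small caveats on your write-up. First, in the case $T\equiv 0$ you appeal to Ferus (Theorem~\ref{thm:pmc-spheres-space-forms}), which yields a \emph{minimal} sphere in a totally umbilical hypersurface of the slice; this is consistent with item~(i) only if one reads ``\textsc{cmc}'' there as allowing mean curvature zero, which is the intended reading. Second, the dichotomy ``$T\equiv 0$'' versus ``$T\not\equiv 0$'' needs a word: one must rule out $T$ vanishing on an open proper subset, which is standard (e.g.\ via real-analyticity of \textsc{pmc} immersions, or by noting that the zeros of the holomorphic $(1,0)$-form $\langle\phi_z,\zeta\rangle\,\mathrm{d}z$ are isolated unless it vanishes identically) but not automatic.
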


\begin{remark}\label{rmk:improvement-spheres-MxR}
Mendon\c{c}a and Tojeiro~\cite{MT2014} improve item (ii) in the previous result by showing that, in general codimention, $\phi(\Sigma)$ is a rotationally surface in a totally geodesic $\mathbb M^m(c) \times \mathbb{R}$, $m \leq 4$, over a curve in a totally geodesic $\mathbb M^s(c)\times \mathbb{R}$, $s \leq 3$.
\end{remark}

\subsection{The Riemannian products $\mathbb M^2(c_1) \times \mathbb M^2(c_2)$.}
\label{subsec:holomorphic-differentials-S2xH2}

Let us finally consider $M =\mathbb M^2(c_1) \times \mathbb M^2(c_2)$. Following the notation introduced in Section~\ref{subsec:holomorphic-differentials-Riemannian-products}, the Riemann tensor of $M$ can be expressed as
\[
\overline{R}(X,Y)Z = c_1 R_0(P_1X,P_1Y)Z + c_2 R_0(P_2X,P_2Y)Z,
\]
where $R_0(X,Y)Z = \langle{Y},{Z}\rangle X - \langle{X},{Z}\rangle Y$, $P_1 = \frac 12 (I+P)$ and $P_2 = \frac 12 (I-P)$ are the projections to the factors, i.e., $P_1(u,v)=(u,0)$ and $P_2(u,v)=(0,v)$.

De Lira and Vit\'{o}rio~\cite{LV08} defined a holomorphic quadratic differential for \textsc{pmc} surfaces in $\mathbb{S}^2\times \mathbb{H}^2$ (where the constant Gauss curvatures of the factors are exactly opposite) and the holomorphicity of this differential also follows from the ideas in~\cite{TU12}. Kowalczyk \cite{Kowalczyk2011} extended this by defining a quadratic differential in the general case of $\mathbb M^2(c_1) \times\mathbb M^2(c_2)$, cf.\ the next proposition. In contrast to the previous cases, the classification of \textsc{pmc} spheres in $\mathbb M^2(c_1) \times\mathbb M^2(c_2)$ is still an open problem, even in $\mathbb{S}^2\times \mathbb{H}^2$. The natural candidates are those given by Proposition~\ref{prop:cmc-totally-umbilical}, i.e., \textsc{cmc} spheres immersed in totally geodesic hypersurfaces of $\mathbb M^2(c_1) \times\mathbb M^2(c_2)$.

\begin{proposition}\label{prop:holomorphic-differential-M2xM2}
Let $\phi: \Sigma \to\mathbb M^2(c_1) \times\mathbb M^2(c_2)$ be a \textsc{pmc} immersion of an oriented surface $\Sigma$ and $z = x+iy$ a conformal parameter. Then
\begin{equation}\label{eq:holomorphic-differential-M2xM2}
    \begin{split}
    \Theta(z) = \Bigl( 2|{H}|^2\langle{\sigma(\partial_z, \partial_z)},{H}\rangle &+ c_1 \langle R_0(P_1\phi_z,P_1H)H,\phi_z \rangle \\
    &- c_2 \langle R_0(P_2\phi_z,P_2H)H,\phi_z \rangle \Bigr) \mathrm{d} z\otimes \mathrm{d} z
    \end{split}
\end{equation}
is a holomorphic quadratic differential on $\Sigma$.
\end{proposition}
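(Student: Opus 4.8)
The plan is to verify two things: that the coefficient of $\df z\otimes\df z$ transforms correctly under a holomorphic change of conformal parameter, so that $\Theta$ is a globally defined quadratic differential, and that this coefficient is annihilated by $\partial_{\bar z}$. The first point is immediate: $\langle\sigma(\partial_z,\partial_z),H\rangle$ and each $\langle R_0(P_i\phi_z,P_iH)H,\phi_z\rangle$ depend quadratically on $\partial_z$ --- the latter because $R_0(P_i\phi_z,P_iH)H$ is linear in $\phi_z$ and is then paired with $\phi_z$ --- while $|H|^2$, $c_1$, $c_2$ are invariant and no conformal factor $e^{2u}$ appears explicitly, so under $w=w(z)$ the coefficient is multiplied by $(z'(w))^2$, exactly as $\df z\otimes\df z$ requires. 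Thus the substance is holomorphicity.

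For holomorphicity I would differentiate the coefficient term by term with respect to $\partial_{\bar z}$. Since $\phi$ is \pmc, Lemma~\ref{lm:first-properties-pmc} gives that $|H|$ is constant and $H$ is a parallel normal section, so Lemma~\ref{lm:properties-conformal-parameter-and-second-fundamental-form}(iv) applies with $\eta=H$ and yields $\bigl(2|H|^2\langle\sigma(\partial_z,\partial_z),H\rangle\bigr)_{\bar z}=2|H|^2\langle\overline R(\partial_{\bar z},\partial_z)\partial_z,H\rangle$. Into the right-hand side I would substitute the Riemann tensor of $M$ displayed above, $\overline R(X,Y)Z=c_1R_0(P_1X,P_1Y)Z+c_2R_0(P_2X,P_2Y)Z$, and expand using $R_0(X,Y)Z=\langle Y,Z\rangle X-\langle X,Z\rangle Y$, the self-adjointness and idempotency of the $P_i$, and the conformality identities $\langle\partial_z,\partial_z\rangle=0$, $\langle\partial_z,\partial_{\bar z}\rangle=\tfrac12 e^{2u}$, $\langle\phi_z,H\rangle=0$.

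For the remaining two terms I would compute $\partial_{\bar z}$ directly, keeping the $R_0$-form. Writing $\alpha_i=P_i\phi_z$, $\beta_i=P_iH$ and using that the $P_i$ are parallel together with the Gauss--Weingarten relations $\overline\nabla_{\partial_{\bar z}}\phi_z=\tfrac12 e^{2u}H$ (from Lemma~\ref{lm:properties-conformal-parameter-and-second-fundamental-form}(ii)--(iii)) and $\overline\nabla_{\partial_{\bar z}}H=-A_H\partial_{\bar z}$ (from $\nabla^\perp H=0$), one gets $\overline\nabla_{\partial_{\bar z}}\alpha_i=\tfrac12 e^{2u}\beta_i$ and $\overline\nabla_{\partial_{\bar z}}\beta_i=-P_iA_H\partial_{\bar z}$. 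Differentiating $\langle R_0(\alpha_i,\beta_i)H,\phi_z\rangle$, using $R_0(\beta_i,\beta_i)=0$ and $\langle R_0(\alpha_i,\beta_i)H,H\rangle=0$, I expect the terms carrying an explicit factor $e^{2u}$ to cancel, leaving $\bigl(\langle R_0(\alpha_i,\beta_i)H,\phi_z\rangle\bigr)_{\bar z}=-2\langle A_H\partial_{\bar z},\,a_i\beta_i-q_i\alpha_i\rangle$ with $a_i=\langle P_i\phi_z,\phi_z\rangle$ and $q_i=\langle P_i\phi_z,H\rangle$; hence the $\partial_{\bar z}$ of the two geometric terms equals $-2\langle A_H\partial_{\bar z},\,c_1(a_1\beta_1-q_1\alpha_1)-c_2(a_2\beta_2-q_2\alpha_2)\rangle$.

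The last step is to check that this cancels $2|H|^2\langle\overline R(\partial_{\bar z},\partial_z)\partial_z,H\rangle$. Here I would use $P_2=\Id-P_1$ and the sign flips it forces, namely $a_2=-a_1$, $q_2=-q_1$, $\langle P_1\partial_{\bar z},\partial_z\rangle+\langle P_2\partial_{\bar z},\partial_z\rangle=\tfrac12 e^{2u}$ and $\langle P_1H,H\rangle+\langle P_2H,H\rangle=|H|^2$, together with $\langle A_H\partial_{\bar z},H\rangle=0$ and $\langle A_H\partial_{\bar z},\partial_z\rangle=\langle\sigma(\partial_{\bar z},\partial_z),H\rangle=\tfrac12 e^{2u}|H|^2$, to bring both expressions into a common form. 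I expect this bookkeeping to be the main obstacle: because neither $\langle P_iH,H\rangle$ nor $\langle P_i\phi_z,\phi_z\rangle$ is constant on $\Sigma$, differentiating them injects shape-operator and conformal-factor contributions, and it is precisely the relation $P_1+P_2=\Id$ combined with the conformality identities that organizes these into a cancellation --- and this is what pins down the asymmetric coefficients in the statement ($2|H|^2$ in front of the extrinsic term, $+c_1$ against $-c_2$). As a consistency check one can specialize to $c_1=c_2$, which recovers Proposition~\ref{prop:differentials-M2xM2}; the computation above is the direct extension of that one to the case $c_1\neq c_2$ carried out by Kowalczyk~\cite{Kowalczyk2011}.
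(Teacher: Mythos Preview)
The paper does not supply a proof of this proposition; it is simply stated, with attribution to Kowalczyk~\cite{Kowalczyk2011}, and followed by the remark that for $c_1=c_2=\pm 1$ the differential reduces to a linear combination of those in Proposition~\ref{prop:differentials-M2xM2}. Your plan is nonetheless exactly the template the paper applies to the analogous Propositions~\ref{prop:holomorphic-differentials-pmc-space-forms}--\ref{prop:differentials-M2xM2} and the $\mathbb{M}^3(c)\times\mathbb{R}$ case: use Lemma~\ref{lm:properties-conformal-parameter-and-second-fundamental-form}(iv) with the parallel section $\eta=H$ to handle $\langle\sigma(\partial_z,\partial_z),H\rangle_{\bar z}$, substitute the explicit curvature tensor $\overline R=c_1R_0(P_1\cdot,P_1\cdot)+c_2R_0(P_2\cdot,P_2\cdot)$, differentiate the correction terms by hand using the parallelism of $P_i$ together with $\overline\nabla_{\partial_{\bar z}}\phi_z=\tfrac12 e^{2u}H$ and $\overline\nabla_{\partial_{\bar z}}H=-A_H\partial_{\bar z}$, and then match. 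Your intermediate identity $\bigl(\langle R_0(P_i\phi_z,P_iH)H,\phi_z\rangle\bigr)_{\bar z}=-2\langle A_H\partial_{\bar z},\,a_i\beta_i-q_i\alpha_i\rangle$ is correct (the $e^{2u}$-terms do cancel as you anticipate), and the relations $a_2=-a_1$, $q_2=-q_1$ coming from $P_1+P_2=\Id$ are exactly what organizes the final cancellation. So your sketch is both correct and in the spirit of the paper's other proofs; the only thing left is the bookkeeping you already flagged.
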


In the case $c_1=c_2=\pm 1$, the holomorphic differential given by~\eqref{eq:holomorphic-differential-M2xM2} is a linear combination of the two holomorphic differentials in Proposition~\ref{prop:differentials-M2xM2}.

\section{The general non-spherical case}

Proposition~\ref{prop:cmc-totally-umbilical} reveals that the ambient spaces considered above are plentiful of \textsc{pmc} immersions in general: any \textsc{cmc} immersion into a totally umbilical \textsc{cmc} hypersurface is \textsc{pmc}. Nonetheless, this description does not give all \textsc{pmc} surfaces in general, as examples in complex space forms or in product manifolds $\mathbb{M}^2(\epsilon)\times\mathbb{M}^2(\epsilon)$ below show. On account of the fact that listing all \textsc{pmc} surfaces is not reasonable, instead local classification results have been considered so far, based either on reducing the codimension to the \textsc{cmc} case (space form cases and $\mathbb M^n(c)\times \mathbb{R}$), or on associating some analytic data with the immersion (complex hyperbolic and projective spaces, see also Hoffman's examples~\cite[Theorem 5.1]{Hoffman1973} in $\mathbb{R}^4$ at the end of Section~\ref{subsec:general-M4}). We will present as well results with extra conditions on the immersion.

\subsection{PMC surfaces in space forms}\label{subsec:general-M4}
\label{subsec:pmc-surfaces-space-forms}
Chen~\cite{Chen73} classified \textsc{pmc} surfaces in Euclidean space $\mathbb{R}^4$, and Yau~\cite{Yau74} gave an independent classification in an arbitrary space form $\mathbb{M}^4(c)$.

\begin{theorem}[{\cite[Theorem 4]{Yau74}}]\label{thm:classification-pmc-space-forms}
Let $\phi: \Sigma \to\mathbb M^4(c)$ be a \textsc{pmc} immersion of an oriented surface $\Sigma$. Then $\Sigma$ is contained in a totally umbilical hypersurface of $\mathbb M^4(c)$ as a \textsc{cmc} surface.
\end{theorem}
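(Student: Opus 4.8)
The plan is to exploit the two holomorphic quadratic differentials $\Theta$ and $\widetilde\Theta$ from Proposition~\ref{prop:holomorphic-differentials-pmc-space-forms} together with the Ricci equation~\eqref{eq:Ricci} to reduce the codimension to one. The key point, already seen in the proof of Theorem~\ref{thm:pmc-spheres-space-forms}, is that $\Theta = 0$ is equivalent to $\phi$ being pseudo-umbilical, i.e.\ $A_H = |H|^2\Id$; so the strategy is to show that, on the whole of $\Sigma$, either the immersion is pseudo-umbilical or else it reduces to $\mathbb M^3(c)$ by a different mechanism, and in both cases conclude. Unlike the spherical case, $\Theta$ and $\widetilde\Theta$ need not vanish identically, so the first step is to analyse them as genuine holomorphic objects: their zeros are isolated (unless identically zero), and away from those zeros one can choose a preferred conformal parameter in which their coefficients are normalised.

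First I would record the consequence of the Ricci equation~\eqref{eq:Ricci} in a space form: since $\overline R(X,Y)H$ lies in the plane spanned by $X,Y$ (hence is tangent to $\Sigma$), the left-hand side $\langle\overline R(X,Y)H,\eta\rangle$ vanishes for every normal $\eta$, so $[A_\eta,A_H]=0$ for all $\eta\in\mathfrak X^\perp(\Sigma)$. In codimension two this means $A_H$ and $A_{\widetilde H}$ commute, hence are simultaneously diagonalisable: there is a (local, and in fact global away from umbilic-type points) orthonormal frame $\{e_1,e_2\}$ of $T\Sigma$ diagonalising both shape operators. Writing $\sigma$ in this frame, the content of $\Theta$ and $\widetilde\Theta$ being holomorphic translates, via Lemma~\ref{lm:properties-conformal-parameter-and-second-fundamental-form}(iv) and the vanishing of $\overline R(\partial_{\bar z},\partial_z)\partial_z$ in the normal direction, into the Codazzi-type equations for the traceless parts of $A_H$ and $A_{\widetilde H}$. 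The next step is the dichotomy: either $\widetilde\Theta\equiv 0$, in which case $A_{\widetilde H}=0$ (the $\widetilde H$-direction carries no second fundamental form and $\widetilde H$ is parallel, so one checks directly that $\mathrm{span}\{T\Sigma, H\}$ is a parallel rank-three subbundle along $\Sigma$, giving codimension reduction to a totally umbilical $\mathbb M^3(c)$); or $\widetilde\Theta\not\equiv 0$, where I would argue that the common eigendirections of $A_H,A_{\widetilde H}$ and the holomorphicity force $A_{\widetilde H}$ to be traceless with a holomorphically varying eigenvalue, and then a Hopf-differential argument shows the normal direction $\widetilde H$ can still be split off.

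To make the codimension reduction precise I would invoke the standard fact (Erbacher-type reduction theorem) that if $\phi:\Sigma\to\mathbb M^4(c)$ has a parallel subbundle $L\subset T^\perp\Sigma$ of rank one which contains $\sigma(X,Y)^{L}$ trivially — equivalently, the first normal space is contained in $L^\perp$ and $L^\perp$ is parallel — then $\phi(\Sigma)$ lies in a totally geodesic $\mathbb M^3(c)$; composing with the observation that a pseudo-umbilical non-minimal surface lies in a totally umbilical (geodesic sphere / horosphere) hypersurface exactly as in the proof of Theorem~\ref{thm:pmc-spheres-space-forms} (the function $f=H+|H|^2\phi$ is constant when $c=0$, with the analogous position-vector argument for $c\neq 0$ using the embedding of $\mathbb M^4(c)$ in $\mathbb R^5$ or $\mathbb R^5_1$), this yields in all cases a \textsc{cmc} surface inside a totally umbilical hypersurface, and Remark~\ref{rmk:cmc-totally-umbilical-proof} confirms the resulting mean curvature bookkeeping. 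The main obstacle I anticipate is the case $\widetilde\Theta\not\equiv 0$: there the surface is genuinely full in a three-dimensional sense and one must rule out the a priori possibility of essential codimension two by a careful global argument combining the isolatedness of the zeros of $\widetilde\Theta$, the commuting shape operators, and the Codazzi equations — this is where Chen's and Yau's arguments do the real work, and where one needs more than just ``a holomorphic differential vanishes''.
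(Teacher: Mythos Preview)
Your treatment of the case $\widetilde\Theta\equiv 0$ is fine and matches the paper (and the paper also handles $\Theta\equiv 0$ separately, as you note via pseudo-umbilicity). The gap is precisely where you flag it: the case where neither differential vanishes identically. There you offer only a sketch --- ``the common eigendirections \dots\ and the holomorphicity force $A_{\widetilde H}$ to be traceless with a holomorphically varying eigenvalue, and then a Hopf-differential argument shows the normal direction $\widetilde H$ can still be split off'' --- and then concede this is the main obstacle. That sketch is not only incomplete, it points in the wrong direction: $A_{\widetilde H}$ is \emph{always} traceless (its trace is $2\langle H,\widetilde H\rangle=0$), and when $\widetilde\Theta\not\equiv 0$ the first normal space is generically all of $T^\perp\Sigma$, so neither $H$ nor $\widetilde H$ can be ``split off'' by an Erbacher-type reduction.

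The paper's idea, which you are missing, is short and decisive: form the meromorphic function $g=\Theta/\widetilde\Theta$. A direct computation in an orthonormal frame shows that, up to a positive real factor, $\Im g$ equals the off-diagonal entry of $[A_H,A_{\widetilde H}]$, which you already know vanishes by the Ricci equation~\eqref{eq:Ricci}. A meromorphic function with identically zero imaginary part is a real constant, say $g\equiv\tan\alpha$. Then the parallel normal field $\xi=\cos\alpha\,H-\sin\alpha\,\widetilde H$ satisfies $A_\xi=(\cos\alpha)|H|^2\,\mathrm{Id}$, and one runs the position-vector argument of Theorem~\ref{thm:pmc-spheres-space-forms} with $\xi$ in place of $H$. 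So the commutation $[A_H,A_{\widetilde H}]=0$ is used not merely to simultaneously diagonalise, but to force the ratio of the two holomorphic differentials to be constant, producing an umbilical direction by \emph{rotating} in the normal bundle rather than by splitting one of $H,\widetilde H$ off.
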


\begin{remark}
  Although Theorem~\ref{thm:classification-pmc-space-forms} is stated in dimension four, Chen and Yau proved this result in arbitrary dimension, showing, more precisely that either $\phi$ is minimal in a totally umbilical hypersurface of $\mathbb{M}^n(c)$, or $\phi$ is a \textsc{cmc} immersion into a totally umbilical three-dimensional submanifold of $\mathbb M^n(c)$.
\end{remark}

\begin{proof}
The idea is to use both differentials defined by~\eqref{eq:quadratic-differentials-space-forms} to show the existence of a parallel normal section $\xi$ such that $A_\xi = \lambda\,\mathrm{Id}$, and the same argument as in the proof of Theorem~\ref{thm:pmc-spheres-space-forms} will ensure that $\Sigma$ satisfies the desired conditions
To illustrate this, let us assume $c=0$.

If $\Theta = 0$, then $A_H = |H|^2\,\mathrm{Id}$ and we can reason as in the proof of Theorem~\ref{thm:pmc-spheres-space-forms}. Likewise, if $\widetilde{\Theta} = 0$, then $A_{\widetilde{H}} = \lambda\, \mathrm{Id}$ with $\lambda = \langle{H},{\widetilde{H}}\rangle = 0$ so $p\mapsto \tilde{H}_p$ is constant in $\mathbb{R}^4$ since $V(\widetilde{H}) = -A_{\widetilde{H}}V + \nabla^\perp_V \widetilde{H} = 0$ for all $V\in \mathfrak{X}(\Sigma)$. The function $f: \Sigma \rightarrow \mathbb{R}$ defined as $f(p) = \langle\phi(p) - \phi(p_0), \widetilde{H}\rangle$ for some $p_0\in\Sigma$ satisfies
\[
V(f) = \langle{V},{\widetilde{H}}\rangle + \langle{\phi(p) - \phi(p_0)},{\overline{\nabla}_V \widetilde{H}}\rangle = 0,\quad \text{for all } V \in \mathfrak{X}(\Sigma),
\]
so $f$ is constant and $\phi(\Sigma)$ lies in a hyperplane of $\mathbb{R}^4$. Moreover, $\phi(\Sigma)$ has constant mean curvature in this hyperplane

Hence we can assume that $\Theta$ and $\widetilde{\Theta}$ are not identically zero. It is not hard to prove that the imaginary part of the meromorphic function $g: \Sigma \to \mathbb{C}$, $g(p) = \Theta(p)/\widetilde{\Theta}(p)$, coincides with the commutator $[A_H,A_{\widetilde{H}}]$, which is zero by the Ricci equation~\eqref{eq:Ricci}. Hence the imaginary part of $g$ identically vanishes, whence $g\equiv\tan(\alpha)$ for some constant $|\alpha|<\frac{\pi}{2}$. The normal vector field $\xi=\cos(\alpha) H-\sin(\alpha) \widetilde{H}$ is parallel and satisfies $A_\xi = \cos\alpha|H|^2 \, \mathrm{Id}$, so we can again continue as in the proof of Theorem~\ref{thm:pmc-spheres-space-forms}, considering the function $f(p) = \xi_p + \cos \alpha |H|^2\phi(p)$.
\end{proof}

For illustrative purposes, let us consider Lawson's minimal examples~\cite[Theorem~2]{Lawson70} in $\mathbb{S}^3$ as a \textsc{pmc} surfaces in $\mathbb{R}^n$, $n \geq 4$. Hence, any compact orientable surface of genus $g$ can be embedded as a \textsc{pmc} surface in $\mathbb{R}^n$, $n \geq 4$. Hoffman gave more examples of \textsc{pmc} surfaces in the space forms not lying in any hypersphere as minimal surfaces~\cite[Theorem~5.1]{Hoffman1973}. More particularly, he showed that, given any holomorphic function $\varphi:U\to\mathbb{C}$ on an open domain $U\subseteq\mathbb{C}$, and constants $H>0$ and $\alpha \in \mathbb{R}$ there exists a \textsc{pmc} immersion in $\mathbb M^4(c)$ such that the length of the mean curvature vector is $H$, $\Theta = \varphi (\mathrm{d} z)^2$ and $\widetilde{\Theta} = \alpha\varphi(\mathrm{d} z)^2$.

\subsection{PMC surfaces in $\mathbb{S}^3\times \mathbb{R}$ and $\mathbb{H}^3\times \mathbb{R}$}\label{subsec:general-M3xR}
\label{subsec:pmc-surfaces-M3xR}
Alencar, Do Carmo and Tribuzy~\cite{AdCT2010} studied \textsc{pmc} immersions in $\mathbb{M}^n(c) \times \mathbb{R}$, $c \neq 0$, for arbitrary $n$. They realized that the quadratic differential~\eqref{eq:holomorphic-differential-space-formxR} introduced by de Lira and Vit\'{o}rio~\cite{LV08} is holomorphic for any $n \geq 2$ (for $n = 2$ it is actually the Abresch-Rosenberg differential~\cite{AR05}). They showed that for a \textsc{pmc} immersion in $\mathbb M^n(c)\times \mathbb{R}$ either $H$ is an umbilical direction, i.e., $A_H = |H|^2\mathrm{Id}$ (so $\phi(\Sigma)$ lies in a slice $\mathbb{M}^n(c) \times \mathbb{R}$, see items (i) and (ii) in Theorem~\ref{thm:classification-pmc-surfaces-space-formxR}), or one can reduce the codimension to three.

\begin{theorem}[{\cite[Theorem~1]{AdCT2010}}]
\label{thm:classification-pmc-surfaces-space-formxR}
Let $\phi:\Sigma\to\mathbb{M}^n(c)\times \mathbb{R}$ a \textsc{pmc} immersion of an oriented surface $\Sigma$. Then, one of the following assertions holds:
\begin{enumerate}
  \item[(i)] $\phi(\Sigma)$ is minimal in a totally umbilical hypersurface of $\mathbb{M}^n(c)\times\{t_0\}$, $t_0 \in \mathbb{R}$.

  \item[(ii)] $\phi(\Sigma)$ is \textsc{cmc} in a three-dimensional totally umbilical submanifold of $\mathbb{M}^n(c)\times\{t_0\}$, $t_0 \in \mathbb{R}$.

  \item[(iii)] If $n\geq 4$, then $\phi(\Sigma)$ lies in a totally geodesic $\mathbb{M}^4(c) \times \mathbb{R}$.
\end{enumerate}
\end{theorem}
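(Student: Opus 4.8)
The plan is to mimic the structure of the proof of Theorem~\ref{thm:classification-pmc-space-forms} (the space-form case), using the holomorphic quadratic differential~\eqref{eq:holomorphic-differential-space-formxR} in place of $\Theta$. First I would fix a conformal parameter $z$ on $\Sigma$ and, since the differential $Q = \Theta$ from~\eqref{eq:holomorphic-differential-space-formxR} is holomorphic, distinguish the case $Q \equiv 0$ from the case $Q \not\equiv 0$. In the former case, unravelling $\langle\sigma(\partial_z,\partial_z),H\rangle = \tfrac{c}{2}\langle\phi_z,\zeta\rangle^2$ should force $H$ to be an umbilical direction \emph{up to} the contribution of $\zeta$; more precisely, I expect to extract that $A_H - |H|^2\Id$ is, pointwise, a multiple of a rank-one operator built from the tangential part $\zeta^\top$ of $\zeta$. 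This is exactly the content behind items (i) and (ii): either $\zeta$ is everywhere normal to $\Sigma$ (so $\phi(\Sigma)$ sits in a slice $\mathbb M^n(c)\times\{t_0\}$ and one reduces further by the space-form theorem~\ref{thm:classification-pmc-space-forms} applied inside the slice), or $\zeta^\top$ does not vanish identically and one shows $\phi(\Sigma)$ lies in a totally umbilical three-dimensional submanifold containing the $\zeta$-direction.

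The heart of the argument, and the part that needs genuine work, is the case $Q \not\equiv 0$, where one must reduce the codimension. Here I would follow the Chen--Yau mechanism adapted to the product: using Lemma~\ref{lm:first-properties-pmc}(i), write $A_H$ and $A_{\widetilde H}$ with respect to the parallel orthonormal normal frame $\{\widetilde H/|H|, H/|H|\}$, and invoke the Ricci equation~\eqref{eq:Ricci}, which in $\mathbb M^n(c)\times\mathbb R$ reads $[A_H,A_\eta]X = (\overline R(X,Y)H)$-term; crucially, by the form of the Riemann tensor~\eqref{eq:Riemann-tensor-M3xR} the curvature term $\overline R(X,Y)H$ is \emph{tangent to the slice direction}, hence its pairing against normal directions is controlled entirely by $\langle H,\zeta\rangle$ and $\langle\cdot,\zeta\rangle$. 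The plan is to deduce that the normal bundle splits off a parallel, flat, \textsc{pmc}-compatible rank-one piece that is \emph{not} seen by the curvature, so that the component of $\sigma$ in that direction is a holomorphic-type object that either vanishes (reducing the codimension) or is itself umbilical; iterating this brings us down to codimension three inside a totally geodesic $\mathbb M^4(c)\times\mathbb R$ when $n\ge 4$, which is item (iii). The bookkeeping of \emph{which} normal directions are curvature-invisible, and checking that the resulting lower-dimensional submanifold is totally geodesic (using that totally geodesic submanifolds of a product are products of totally geodesic submanifolds, as noted after Proposition~\ref{prop:cmc-totally-umbilical}), is where I expect the main obstacle.

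Concretely, the key steps in order are: (1) establish holomorphicity of $Q$ (already done in the proposition preceding Theorem~\ref{thm:pmc-spheres-space-formxR}); (2) in the case $Q\equiv 0$, analyse the pointwise structure of $A_H$ and split according to whether $\zeta^\top \equiv 0$, obtaining (i)/(ii) via an $f$-is-constant argument identical in spirit to the one in the proof of Theorem~\ref{thm:pmc-spheres-space-forms}; (3) in the case $Q\not\equiv 0$, use the Ricci equation together with~\eqref{eq:Riemann-tensor-M3xR} to locate a parallel normal direction $\xi$ along which the second fundamental form is unobstructed by curvature, and show $\sigma(\cdot,\cdot)$ has no $\xi$-component, i.e., $\xi$ is a parallel normal direction in which $\phi$ is ``flat'', reducing the codimension; (4) iterate/combine to conclude $\phi(\Sigma)\subset \mathbb M^4(c)\times\mathbb R$ totally geodesically when $n\ge 4$, and invoke the already-proven four-dimensional statements (Theorem~\ref{thm:classification-pmc-space-forms} applied in slices, or direct analysis) to land in (i) or (ii) when $n\le 3$. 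The main obstacle is step (3): unlike the space-form case, the curvature term is nonzero and $\zeta$-dependent, so one cannot directly conclude $A_\xi = \lambda\,\Id$; instead one must carefully separate the ``$\zeta$-driven'' part of the geometry from the genuinely higher-codimensional part, which is precisely the technical novelty of~\cite{AdCT2010}.
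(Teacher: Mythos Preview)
First, note that the paper does not actually prove this theorem; it only records a one-line summary just before the statement: either $A_H=|H|^2\,\Id$ (the pseudo-umbilical case), in which case $\phi(\Sigma)$ lies in a slice $\mathbb M^n(c)\times\{t_0\}$ and Theorem~\ref{thm:classification-pmc-space-forms} applied inside the slice yields (i) or (ii); or $A_H$ is not a multiple of the identity, and one reduces the codimension to three, obtaining (iii). Your dichotomy is different and does not match the conclusions. You split according to whether the differential $\Theta$ of~\eqref{eq:holomorphic-differential-space-formxR} vanishes, but this is neither necessary nor sufficient for (i)/(ii): if $A_H=|H|^2\,\Id$ then $\langle\sigma(\partial_z,\partial_z),H\rangle=0$ and $\Theta=-c\langle\phi_z,\zeta\rangle^2(\df z)^2$, which need not vanish; conversely, $\Theta\equiv 0$ with $\zeta^\top\not\equiv 0$ cannot produce (i) or (ii), since both items explicitly place $\phi(\Sigma)$ in a slice. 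Your proposed sub-case ``$\Theta\equiv 0$, $\zeta^\top\neq 0$ $\Rightarrow$ totally umbilical three-submanifold containing the $\zeta$-direction'' therefore leads nowhere in the trichotomy. In fact $\Theta$ plays no role in this theorem at all; it is the tool for Theorem~\ref{thm:pmc-spheres-space-formxR} (spheres), not for codimension reduction.

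The genuine mechanism, which you brush against in step~(3) but never isolate, is purely algebraic via Ricci. A direct computation with~\eqref{eq:Riemann-tensor-M3xR} shows that for tangent $X,Y$ one has $\overline R(X,Y)H=c\,\langle H,\zeta\rangle\bigl(\langle X,\zeta\rangle Y-\langle Y,\zeta\rangle X\bigr)$, which is \emph{tangent} to $\Sigma$; hence $\langle\overline R(X,Y)H,\eta\rangle=0$ for \emph{every} normal $\eta$, and since $H$ is parallel the Ricci equation gives $[A_H,A_\eta]=0$ for all $\eta\in\X^\perp(\Sigma)$. If $A_H$ has two distinct eigenvalues on a dense set, every $A_\eta$ is simultaneously diagonal, and a Codazzi argument then shows that the shape operators vanish along the normal subbundle orthogonal to the span of $H$ and $\zeta^\perp$, giving the reduction to $\mathbb M^4(c)\times\mathbb R$. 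Two concrete problems with your outline: you never obtain $[A_H,A_\eta]=0$ for \emph{all} $\eta$ (you only look for one ``curvature-invisible'' direction at a time), and you invoke the frame $\{H,\widetilde H\}$ from Lemma~\ref{lm:first-properties-pmc}, which is defined only in codimension two and is unavailable precisely in the case $n\ge 4$ where (iii) is the content.
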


\begin{remark}\label{rmk:classification-pmc-surfaces-space-formxR}
  Notice that Theorem~\ref{thm:classification-pmc-surfaces-space-formxR} does not provide a classification of \textsc{pmc} surfaces in $\mathbb{M}^3(c) \times \mathbb{R}$ or in $\mathbb{M}^4(c)\times\mathbb{R}$. Therefore the final classification of \textsc{pmc} surfaces in $\mathbb M^n(c)\times\mathbb{R}$ depends upon the cases  $n = 3$ and $n = 4$, which remain open. Moreover, \textsc{pmc} spheres have only been classified for $n = 3$ (cf.\ Theorem~\ref{thm:pmc-spheres-space-formxR}), though it is proven that they must be \emph{rotationally invariant} for $n = 4$~\cite[Theorem 2]{AdCT2010}.
\end{remark}

Mendon\c{c}a and Tojeiro have also discussed \textsc{pmc} immersions in $\mathbb{M}^n(c)\times \mathbb{R}$ in~\cite{MT2014}. They obtained more information adding an extra hypothesis. They show that, if $\phi(\Sigma)$ is not contained in a slice (cases (i) and (ii) in Theorem~\ref{thm:classification-pmc-surfaces-space-formxR}) and $\Theta \equiv 0$ then $\phi(\Sigma)$ is rotationally invariant in the sense exposed in Remark~\ref{rmk:improvement-spheres-MxR}. In particular, this condition is fulfilled if either $\Sigma$ is diffeomorphic to a sphere or $\Sigma$ is a complete non-flat surface in $\mathbb{H}^n\times \mathbb{R}$ with non-negative Gaussian curvature (cp.\ Remarks~\ref{rmk:pmc-space-forms-Theta-zero} and~\ref{rmk:pmc-S2xS2-H2xH2-Theta-zero}).

In order to prove the latter assertion, observe that if $\Theta \not\equiv 0$, then $\Delta \log |\Theta| = 4K \geq 0$, i.e., $\log|\Theta|$ is a superharmonic function bounded from below in $\Sigma$. Since $K\geq 0$ it follows that $\Sigma$ has quadratic area growth, so $\log|\Theta|$ must be constant in view of~\cite[Corollary~1]{CY75}. From the fact that $|\Theta|$ is constant, it follows that $K$ is also constantly zero.

This idea was previously developed by Hoffman~\cite{Hoffman1973} for \textsc{pmc} surfaces in space forms. It is worth pointing out that Hoffman was able to deal with the cases $K \geq 0$ and $K \leq 0$ in both $\mathbb{S}^4$ and $\mathbb{H}^4$, by finding suitable superharmonic functions bounded from below and reducing to the constant Gauss curvature case. On the contrary, Alencar, do Carmo and Tribuzy only treated the case $K \geq 0$ in $\mathbb{H}^n(c) \times \mathbb{R}$. This result has been extended to $\mathbb{S}^n(c)\times\mathbb{R}$ by Fetcu and Rosenberg~\cite[Theorem 1.2]{FR2011} by using a Simon-type equation.

\subsection{PMC surfaces in $\mathbb{C}\mathrm{H}^2$ and $\mathbb{C}\mathrm{P}^2$}
\label{subsec:pmc-surfaces-complex-spaces}

The classification of \textsc{pmc} surfaces in $\mathbb{C}\mathrm{P}^2$ and $\mathbb{C}\mathrm{H}^2$ appeared first in a paper of Kenmotsu and Zhou~\cite{KZ00}. Unfortunately, their result depended upon the structure equations for \textsc{pmc} surfaces given by Ogata~\cite{Ogata95}, which turned out to be incorrect (see~\cite{KO2015} for the correction). However, Hirakawa~\cite[Theorem 2.1]{Hirakawa2006}, who spotted Ogata's mistake, gave a partial solution to the problem, recently completed by Kenmotsu~\cite{Kenmotsu16} in a non-explicit way.

Given an immersion of an oriented surface $\Sigma$ in $\mathbb{C}\mathrm{M}^2(c)$ (or more generally, in any complex manifold), the K{\"a}hler function $C: \Sigma \to [-1,1]$ is defined by $C(p) = \langle{Je_1},{e_2}\rangle$, where $\{e_1,e_2\}$ is an oriented orthonormal basis of $T_p\Sigma$ and $J$ is the complex structure (some authors define $\theta = \arccos C$ as the \emph{K{\"a}hler angle} of the immersion instead). The points $p \in \Sigma$ where $C^2(p) = 1$ are called \emph{complex}, that is, they are the points where $T_p\Sigma$ is complex. Likewise, the points $p$ where $C(p) = 0$ are the points where $T_p\Sigma$ is totally real. In particular, if $C$ is constant zero, then the immersion is Lagrangian.

The main goal of~\cite{Hirakawa2006} was to study \textsc{pmc} surfaces with constant Gauss curvature (in particular, constant K{\"a}hler angle \textsc{pmc} surfaces, see the following paragraph), but Hirakawa also dealt with \textsc{pmc} surfaces satisfying a technical condition in Ogata's equation, namely $a = \bar{a}$, where $a = \langle{J\nabla C},{H + i\widetilde{H}}\rangle$ (see Remark~\ref{rmk:definition-a-complex-space-forms}). This condition implies geometrically the existence of special coordinates in $\Sigma$ such that the $C$ only depends on one coordinate, see~\cite{KO2015}. He also pointed out some examples that were missing in Kenmotsu and Zhou's paper. Hirakawa found, among others, \textsc{pmc} spheres and Delaunay \textsc{cmc} surfaces in $\mathbb{R}^3\subset \mathbb{R}^4$, and gave four different types of solutions in $\mathbb{C}^2$, one type in $\mathbb{C}\mathrm{P}^2$ and $\mathbb{C}\mathrm{H}^2$ with $H^2\geq 2$, and two special types in $\mathbb{C}\mathrm{H}^2$ for $H^2 = 4/3$. Kenmotsu described the rest of \textsc{pmc} examples in $\mathbb{C}\mathrm{P}^2$ and $\mathbb{C}\mathrm{H}^2$, that is, those with $a \neq \bar{a}$ (in particular with non-constant K{\"a}hler angle) in terms of a real-valued harmonic function and five real constants (cp.~\cite[Theorem 5.1]{Hoffman1973}).

\begin{theorem}[{\cite[Theorem 2.1]{Hirakawa2006} and \cite{Kenmotsu16}}]
\label{thm:classification-pmc-complex-space-forms}
Let $\Sigma$ be a \textsc{pmc} surface immersed in $\mathbb{C}\mathrm{M}^2(c)$ and $a: \Sigma\to \mathbb{C}$ given by $a = \langle{J\nabla C},{H + i\widetilde{H}}\rangle$, where $\nabla C$ is the gradient of the K{\"a}hler function and $\widetilde{H}$ is defined in Lemma~\ref{lm:first-properties-pmc}.
\begin{enumerate}
    \item If $a$ is a real-valued function, then one of the following assertions holds:
    \begin{enumerate}
        \item[(i)] $|H|^2 \geq -c/2$ and the immersion is Lagrangian, or
        \item[(ii)] $|H|^2 = -c/3$ and either the K{\"a}hler function is constant $1/3$ or it is a special solution (see item (iii)-2 in~\cite[Theorem 2.1]{Hirakawa2006}).
    \end{enumerate}
    \item If $a \neq \bar{a}$, then the solution depends on one real-valued harmonic function and five real constants.
\end{enumerate}
\end{theorem}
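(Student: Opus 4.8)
The plan is to translate the problem entirely into the moving-frame structure equations of a \pmc surface in $\CM^2(c)$, following Ogata~\cite{Ogata95} with the correction of~\cite{KO2015}, and then to integrate the resulting system according to the behaviour of the function $a$. First I would fix the canonical normal frame $\{e_3,e_4\}=\{\widetilde H/|H|,H/|H|\}$ provided by Lemma~\ref{lm:first-properties-pmc}, together with an oriented orthonormal tangent frame $\{e_1,e_2\}$, and write down the components of the second fundamental form relative to these frames. The Ricci equation~\eqref{eq:Ricci}, combined with the expression~\eqref{eq:Riemann-tensor-complex-space-forms} of $\overline R$, shows that the commutator $[A_H,A_{\widetilde H}]$ is entirely governed by the Kähler function $C$ and the tangential parts $(Je_i)^\top$; in particular $A_H$ fails to be $|H|^2\Id$ only in a way controlled by $C$. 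Codazzi's equation, contracted against $H$ and $\widetilde H$ and rewritten in a conformal parameter, then yields exactly the corrected Ogata system: a second-order PDE for $C$ coupled to first-order relations determining the off-diagonal part of $A_{\widetilde H}$, in which the quantity $a=\langle{J\nabla C},{H+i\widetilde H}\rangle$ enters as the natural integrating factor. The holomorphicity of the differentials~\eqref{eq:Hopf-diferentials-complex-space-forms} can be used as a consistency check, as it encodes part of this system.

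If $a=\bar a$ I would follow Hirakawa~\cite{Hirakawa2006}: reality of $a$ forces the existence of a distinguished coordinate in which $C$ depends on a single variable (this is the geometric content of $a=\bar a$ recalled in~\cite{KO2015}), so the Ogata PDE collapses to an autonomous ODE for $C$. One then splits into cases. If $C$ is non-constant, integrating this ODE produces the one-parameter family of special solutions of item (iii)-2 in~\cite[Theorem~2.1]{Hirakawa2006}, and forces $|H|^2=-c/3$. If $C$ is constant, the complex case $C^2\equiv 1$ is impossible because a complex surface in a Kähler manifold is minimal while $H\neq 0$; the Lagrangian case $C\equiv 0$ survives precisely when the structure equations remain compatible, which is the inequality $|H|^2\geq -c/2$; and the only other admissible constant value is $C\equiv 1/3$, again forcing $|H|^2=-c/3$. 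This settles item~(1).

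If $a\neq\bar a$ one can no longer decouple the system, and this is where the main work — and the main obstacle — lies, essentially reproducing Kenmotsu's argument~\cite{Kenmotsu16}. The strategy is to introduce a complex coordinate adapted to the immersion in which the imaginary part of $a$ becomes a new unknown; manipulating the corrected Ogata equations, one shows that the integrability conditions are equivalent to a single scalar equation asserting that an explicit real combination of $C$, $|H|$ and the frame data is harmonic. Conversely, given such a harmonic function one reconstructs the immersion by integrating the now-compatible structure equations, the remaining freedom being exactly five real constants of integration. The delicate points are verifying that this reduction is reversible — that every harmonic function genuinely yields a \pmc surface rather than a merely formal solution — and the bookkeeping that pins the number of free constants at five; this is precisely why the resulting classification is non-explicit. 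Combining the two cases gives the statement.
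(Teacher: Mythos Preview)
The paper does not supply its own proof of this theorem: it is a survey, and the result is stated with attribution to~\cite[Theorem~2.1]{Hirakawa2006} and~\cite{Kenmotsu16}, the surrounding text merely summarising the history (Ogata's structure equations, their correction, Hirakawa's treatment of the case $a=\bar a$, and Kenmotsu's completion when $a\neq\bar a$). Your proposal is a faithful sketch of exactly that route---set up the corrected Ogata system in the canonical normal frame, reduce to an ODE when $a$ is real and follow Hirakawa's case analysis, and in the non-real case perform Kenmotsu's reduction to a harmonic function plus constants---so it matches the approach the paper points to, even though there is no in-paper proof to compare against line by line.
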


\begin{remark}\label{rmk:definition-a-complex-space-forms}
Our definition of $a$ differs from the definition in \cite{Hirakawa2006, Kenmotsu16} in a multiplicative real function plus a constant term $-\frac12|H|$, which is irrelevant for the statement. Actually,
\[
a = \frac{1}{2|H|(1-C^2)}\langle{\nabla C},{J(H-i\widetilde{H})}\rangle - \frac12|H|,
\]
which is defined in the open dense set $\Sigma\setminus\{p\in \Sigma:\, C(p)^2 = 1\}$ (observe that the interior of the set $\{p\in \Sigma:\, C(p)^2 = 1\}$ is empty since otherwise the interior will be a complex surface, hence minimal, and we are supposing that $\Sigma$ is \textsc{pmc}).
\end{remark}

\begin{remark}
Among the solutions given by Theorem~\ref{thm:classification-pmc-complex-space-forms}, the following are those with constant Gauss curvature (see~\cite[Theorem~1.1]{Hirakawa2006}):
\begin{itemize}
    \item Either $K = -H^2/2$ and $\Sigma \subset \mathbb{C}\mathrm{M}^2(-3|H|^2)$ is (an open piece of):
    \begin{enumerate}
        \item[(i)] the slant surface found by Chen in~\cite{Chen1998}, or
        \item[(ii)] one of the examples described in~\cite[Examples, p.~230]{Hirakawa2006}.
    \end{enumerate}

    \item Or $K = 0$ and the immersion is Lagrangian and $\Sigma$ is (an open piece of):
    \begin{enumerate}
        \item[(i)] the product of two circles in $\mathbb{C}\mathrm{P}^2(c)$, $c > 0$, \cite[Theorem 2]{DT1995}, or
        \item[(ii)] a plane, a cylinder, or a product of two circles in $\mathbb{C}\mathrm{H}^2(c)$, $c < 0$ with $|H|^2 \geq -c/2$, \cite[Theorem 1]{Hirakawa2004}.
    \end{enumerate}
\end{itemize}

In Theorem~\ref{thm:classification-pmc-complex-space-forms} we omitted the case of $\mathbb{C}^2$ on purpose. Nevertheless, Hoffman~\cite[Proposition 3.4]{Hoffman1973} proved that a \textsc{pmc} flat surface in $\mathbb{C}^2$ is part of a cylinder or a product of two circles (see also~\cite[Theorem 7.1]{Chen1990}). Hirakawa also studied \textsc{pmc} surfaces with constant Gauss curvature in $\mathbb{C}^2$ (see items (2)-(b) and (3) in~\cite[Theorem~1.1]{Hirakawa2006} and item (ii) in~\cite[Theorem~2.1]{Hirakawa2006}).
\end{remark}

\subsection[PMC surfaces in Riemannian products]{PMC surfaces in $\mathbb{S}^2\times \mathbb{S}^2$ and $\mathbb{H}^2\times \mathbb{H}^2$}
\label{subsec:pmc-surfaces-Riemannian-products}

The case $\mathbb M^2(\epsilon)\times\mathbb M^2(\epsilon)$, $\epsilon^2 = 1$, is of different nature to the other cases we have presented so far. The classification is still incomplete, being only known under an extra assumption on the \emph{extrinsic normal curvature}. This curvature is defined in the same fashion as the normal curvature $K^\bot$, but using the ambient Riemannian curvature tensor $\overline{R}$ in Equation~\eqref{eqn:normal-curvature} rather than the curvature tensor $R^\bot$.

\begin{theorem}[{\cite[Theorems~2 and~3]{TU12}}]
\label{thm:clasification-pmc-M2xM2}
Let $\phi: \Sigma \rightarrow\mathbb M^2(\epsilon)\times\mathbb M^2(\epsilon)$ be a \textsc{pmc} immersion of an oriented surface $\Sigma$ with vanishing extrinsic normal curvature. Then $\phi$ is locally congruent to
\begin{enumerate}
  \item a \textsc{cmc} surface in a totally geodesic $\mathbb{M}^2(\epsilon)\times \mathbb{M}^1(\epsilon)$, or
  \item a specific example given in~\cite[Example~1 and Proposition~5]{TU12}.
\end{enumerate}
Moreover, if $\phi$ is Lagrangian (not necessarily with vanishing extrinsic normal curvature), then $\phi(\Sigma)$ is an open set of the examples given in~\cite[Example~1]{TU12}.
\end{theorem}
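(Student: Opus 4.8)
The plan is to combine the two holomorphic differentials of Proposition~\ref{prop:differentials-M2xM2} with the parallel product structure $P$ of $M=\mathbb{M}^2(\epsilon)\times\mathbb{M}^2(\epsilon)$, using the extra hypothesis to force the shape operators to commute. I would first rephrase the assumption: by definition the extrinsic normal curvature at a point $p$ equals $\langle\overline{R}(e_1,e_2)e_3,e_4\rangle$ with $e_3=\widetilde{H}/|H|$ and $e_4=H/|H|$, and by the Ricci equation~\eqref{eq:Ricci} together with the flatness of the normal bundle (Lemma~\ref{lm:first-properties-pmc}) this equals $-|H|^{-2}\langle[A_{\widetilde{H}},A_H]e_1,e_2\rangle$. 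Since $[A_{\widetilde{H}},A_H]$ is skew-adjoint on the tangent plane, the hypothesis is therefore equivalent to $[A_H,A_{\widetilde{H}}]=0$, so at each point $A_H$ and $A_{\widetilde{H}}$ have a common orthonormal eigenframe $\{e_1,e_2\}$ (smooth away from the closed, nowhere dense umbilic locus, handled by continuity). In a space form this would already give a parallel umbilical normal direction as in the proof of Theorem~\ref{thm:classification-pmc-space-forms}; here the differentials~\eqref{eq:holomorphic-differentials-Riemannian-products-var} carry the extra terms $\langle J_1\phi_z,\cdot\rangle^2$, so the eigenframe has to be coupled with the behaviour of $P$ along $\Sigma$.

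Next I would write down the structure equations. Expressing $P$ in the adapted orthonormal frame $\{e_1,e_2;H/|H|,\widetilde{H}/|H|\}$ of $TM$ along $\Sigma$ introduces finitely many angle functions (the entries $\langle Pe_i,e_j\rangle$, $\langle Pe_i,H\rangle$, $\langle Pe_i,\widetilde{H}\rangle$ and $\langle PH,\widetilde{H}\rangle$); differentiating these along $e_1,e_2$ and using that $P$ is a parallel self-adjoint involution, together with the Gauss and Weingarten formulas, yields a first-order system whose coefficients are the principal curvatures of $A_H$ and $A_{\widetilde{H}}$ and the connection form of $\{e_1,e_2\}$. The Codazzi equation of $M$ (equivalently, the holomorphicity of $\Theta_1,\Theta_2$, itself a consequence of Codazzi), the Gauss equation and the explicit curvature tensor~\eqref{eq:Riemann-tensor-Riemannian-products} close this system. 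I would then split into two regimes. If some unit combination $\xi=aH/|H|+b\widetilde{H}/|H|$ turns out to be parallel in $M$ with $A_\xi=0$, then $\overline{R}(\,\cdot\,,\,\cdot\,)\xi$ is normal to $\Sigma$ (this follows from~\eqref{eq:Riemann-tensor-Riemannian-products}), so reduction of codimension places $\phi(\Sigma)$ in a totally geodesic hypersurface of $M$; by the classification recalled after Proposition~\ref{prop:cmc-totally-umbilical} such a hypersurface is congruent to $\mathbb{M}^2(\epsilon)\times\mathbb{M}^1(\epsilon)$ and there $\phi$ is automatically \textsc{cmc}, giving case~(1). In the complementary regime the closed system is rigid: the angle functions, the principal curvatures and the conformal factor are determined up to the value $|H|$ and an isometry of $M$, and integrating the resulting ordinary differential equations one recovers exactly the surface of~\cite[Example~1 and Proposition~5]{TU12}, giving case~(2).

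For the Lagrangian statement I would argue separately; no hypothesis on the extrinsic normal curvature is needed. If $J_1(T\Sigma)=T^\perp\Sigma$, then $J_1H$ is tangent and, since $J_1$ is parallel and $\nabla^\perp H=0$, for every $X\in\mathfrak{X}(\Sigma)$ one has $\overline{\nabla}_X(J_1H)=J_1\overline{\nabla}_XH=-J_1A_HX\in J_1(T\Sigma)=T^\perp\Sigma$; taking tangential parts gives $\nabla_X(J_1H)=0$, so $J_1H$ is a nowhere-vanishing parallel tangent field and $\Sigma$ is flat. A flat Lagrangian \textsc{pmc} surface is then rigid --- the parallel field $J_1H$ trivialises $T\Sigma$, $J_1$ carries the principal frame of $A_H$ onto a frame of the normal bundle, and the resulting structure equations integrate --- and one recognises the examples of~\cite[Example~1]{TU12}.

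The main obstacle is the rigid regime of the second step: one has to control the overdetermined system globally, rule out the degenerate loci where the angle functions take their extreme values $0$ or $\pm1$ (the complex and Lagrangian points for $J_1$ or $J_2$) by an openness--closedness argument, and then integrate the structure equations explicitly enough to identify the example. The genuinely delicate point is to choose the adapted moving frame so that the system of structure equations actually closes instead of generating ever more unknowns.
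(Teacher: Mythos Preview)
The paper does not actually prove this theorem: being a survey, it cites \cite{TU12} and adds only the sentence ``the proof, which will not be sketched here, heavily relies upon the complex structure of $\mathbb{M}^2(\epsilon)\times\mathbb{M}^2(\epsilon)$, not only on the product structure as in other cases.'' So the only thing to compare your proposal against is this methodological hint.

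Your reformulation of the hypothesis as $[A_H,A_{\widetilde H}]=0$ via the Ricci equation is correct, and your Lagrangian argument (the tangent field $J_1H$ is parallel, hence $\Sigma$ is flat) is valid. Where your plan diverges from the hint is in the organising principle: you set up the structure equations through the matrix of the product structure $P$ in an adapted frame, treating $J_1,J_2$ as auxiliary, whereas the paper's remark (and the original \cite{TU12}) indicates that the two K\"ahler functions $C_j=\langle J_je_1,e_2\rangle$ and their coupling with the holomorphic differentials $\Theta_1,\Theta_2$ of Proposition~\ref{prop:differentials-M2xM2} are what actually drive the classification. Either route could in principle work, but the complex-structure framing is what makes the overdetermined system tractable in \cite{TU12}.

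The substantive gap is your dichotomy in the second paragraph. You split into ``there is a parallel normal $\xi$ with $A_\xi=0$'' versus ``the closed system is rigid,'' but the first alternative is only a pointwise condition (linear dependence of $A_H$ and $A_{\widetilde H}$) whose constancy along $\Sigma$ you do not establish, and in the second alternative you simply assert that the system integrates to a single explicit family without indicating any mechanism. In the space-form proof of Theorem~\ref{thm:classification-pmc-space-forms} this dichotomy is resolved because $\Theta/\widetilde\Theta$ is a real-valued holomorphic, hence constant, function; here the extra K\"ahler terms in~\eqref{eq:holomorphic-differentials-Riemannian-products} destroy that shortcut, and replacing it is precisely where the complex-geometric input of \cite{TU12} enters. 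Your sketch names this difficulty honestly but does not resolve it. A smaller point: the claim that $\overline R(\cdot,\cdot)\xi$ is normal to $\Sigma$ does not follow from~\eqref{eq:Riemann-tensor-Riemannian-products} alone (that formula still has tangential pieces $\langle Y,P\xi\rangle(PX)^\top$); it follows from the Codazzi equation together with $A_\xi\equiv 0$, after which one still needs an Erbacher-type argument adapted to this symmetric space to land in a totally geodesic hypersurface.
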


\begin{remark}
Among the examples described in~\cite{TU12}, there are \textsc{pmc} surfaces not lying in a totally geodesic hypersurface of $\mathbb M^2(\varepsilon)\times \mathbb M^2(\varepsilon)$.
\end{remark}

The proof, which will not be sketched here, heavily relies upon the complex structure of $\mathbb M^2(\epsilon)\times\mathbb M^2(\epsilon)$, not only on the product structure as in other cases. It is worth mentioning that there is also a local correspondence between pairs of \textsc{cmc} immersions in $\mathbb M^2(\epsilon)\times \mathbb{R}$ and \textsc{pmc} immersions in $\mathbb{M}^2(\epsilon)\times\mathbb{M}^2(\epsilon)$~\cite[Theorem~1]{TU12}. This relation provides a weak rigidity result for \textsc{cmc} surfaces in $\mathbb{M}^2(\epsilon)\times \mathbb{R}$. It is conjectured that the condition on the extrinsic normal curvature can be dropped, but probably that problem needs a different approach. If this conjecture were true, it would also imply a strong rigidity result for \textsc{cmc} surfaces in $\mathbb{S}^2 \times \mathbb{R}$ and $\mathbb{H}^2 \times \mathbb{R}$ (cf.~\cite[Corollary 3]{TU12}).

\medskip

\textsc{King's College London, Department of Mathematics, Strand wc2r 2ls London, UK.} E-mail address: \texttt{manzanoprego@gmail.com}

\textsc{Centro Universitario de la Defensa. Academia General del Aire. San Javier, Spain.} E-mail address: \texttt{francisco.torralbo@cud.upct.es}

\textsc{KU Leuven, Department of Mathematics, Celestijnenlaan 200B -- Box 2400, 3001 Leuven, Belgium.} E-mail address: \texttt{joeri.vanderveken@kuleuven.be}\\[-20pt]
\begin{center}
\Large{\aldine}
\end{center}
\end{document}